\definecolor{webgreen}{rgb}{0,.5,0}
\definecolor{webbrown}{rgb}{.6,0,0}
\theoremstyle{plain}
\newtheorem{theorem}{Theorem}
\newtheorem{corollary}[theorem]{Corollary}
\newtheorem{lemma}[theorem]{Lemma}
\newtheorem{proposition}[theorem]{Proposition}
\theoremstyle{definition}
\newtheorem{example}{Example}
\newcommand{\setEnvironmentQed}[2]{
  \AtBeginEnvironment{#1}{%
    \pushQED{\qed}\renewcommand{\qedsymbol}{#2}%
  }
  \AtEndEnvironment{#1}{\popQED}
}
\theoremstyle{remark}
\newtheorem*{remark*}{Remark}
\begin{document}

\title{Mathematics of Gozinta Boxes}

\author{Dillan Agrawal}
\author{Selena Ge}
\author{Jate Greene}
\author{Dohun Kim}
\author{Rajarshi Mandal}
\author{Tanish Parida}
\author{Anirudh Pulugurtha}
\author{Gordon Redwine}
\author{Soham Samanta}
\author{Albert Xu}
\affil{PRIMES STEP}
\author{Tanya Khovanova}
\affil{MIT}

\maketitle

\begin{abstract}
We study the geometric aspects of the magic trick called Gozinta Boxes. We generalize Gozinta Boxes to other dimensions, and we show that in three and higher dimensions, the maximum number of boxes is 3, and in two dimensions, the maximum is 4. We discuss other properties of Gozinta Boxes and provide a plethora of examples.
\end{abstract}

\section{Introduction}

\subsection{History and Background}

In 1966, a famous Czech magician named Lubor Fiedler thought of the concept of Gozinta Boxes while moving packages around in his apartment. He shared this idea with the rest of the world at the 1970 International Federation of Magic Societies convention in Amsterdam \cite{PP}. 

The original Gozinta Boxes trick involves a pair of boxes that ``magically'' fit inside each other. The trick begins with a red box placed inside of a black box, making the black box appear larger than the red box. Then, the black box is placed inside the red box, producing a paradoxical effect. The boxes are then shown to be exactly the same size, enhancing the effect. In reality, the outside box is not fully closed and is a rotated version of the inside box. Figure~\ref{fig:3dbox} shows a closed box on the left and an expanded box on the right.

\begin{figure}
\begin{center}
\tdplotsetmaincoords{80}{110}    
\begin{tikzpicture}[tdplot_main_coords, line join=round, scale=0.55]
\coordinate (O)   at (0,0,0);
\coordinate (X)   at (3.5,0,0);
\coordinate (Y)   at (0,4,0);
\coordinate (Z)   at (0,0,4.5);
\coordinate (XY)  at (3.5,4,0);
\coordinate (XZ)  at (3.5,0,4.5);
\coordinate (YZ)  at (0,4,4.5);
\coordinate (XYZ) at (3.5,4,4.5);
\draw[black, thick] (X)--(XY)--(Y);   
\draw[black, thick] (Z)--(YZ)--(Y);          
\draw[black, thick] (X)--(XZ)--(XYZ)--(XY);       
\draw[black, thick] (Z)--(XZ); \draw[black, thick] (YZ)--(XYZ); 

\coordinate (o)   at (-0.2,-0.2,0.5);
\coordinate (x)   at (3.7,-0.2,0.5);
\coordinate (y)   at (-0.2,4.2,0.5);
\coordinate (z)   at (-0.2,-0.2,4.7);
\coordinate (xy)  at (3.7,4.2,0.5);
\coordinate (xz)  at (3.7,-0.2,4.7);
\coordinate (yz)  at (-0.2,4.2,4.7);
\coordinate (xyz) at (3.7, 4.2, 4.7);
\draw[black, thick] (x)--(xy)--(y);   
\draw[black, thick] (z)--(yz)--(y);          
\draw[black, thick] (x)--(xz)--(xyz)--(xy);       
\draw[black, thick] (z)--(xz); \draw[black, thick] (yz)--(xyz); 
\definecolor{lightgray}{rgb}{0.8,0.8,0.8}
\definecolor{darkgray}{rgb}{0.3,0.3,0.3}
    \fill[lightgray,opacity=0.5] (3.5,0,0) -- (3.5,4,0) -- (3.5,4,4.5) -- (3.5,0,4.5) -- cycle;
    \fill[lightgray,opacity=0.5] (0,4,0) -- (3.5,4,0) -- (3.5,4,4.5) -- (0,4,4.5) -- cycle;
    \fill[lightgray,opacity=0.5] (0,4,4.5) -- (3.5,4,4.5) -- (3.5,0,4.5) -- (0,0,4.5) -- cycle;

    \fill[darkgray,opacity=0.5] (3.7,-0.2,0.5) -- (3.7,4.2,0.5) -- (3.7,4.2,4.7) -- (3.7,-0.2,4.7) -- cycle;
    \fill[darkgray,opacity=0.5] (-0.2,4.2,0.5) -- (3.7,4.2,0.5) -- (3.7,4.2,4.7) -- (-0.2,4.2,4.7) -- cycle;
    \fill[darkgray,opacity=0.5] (-0.2,4.2,4.7) -- (3.7,4.2,4.7) -- (3.7,-0.2,4.7) -- (-0.2,-0.2,4.7) -- cycle;
\coordinate (O1)   at (0,8,-1.5);
\coordinate (X1)   at (3.5,8,-1.5);
\coordinate (Y1)   at (0,12,-1.5);
\coordinate (Z1)   at (0,8,3);
\coordinate (XY1)  at (3.5,12,-1.5);
\coordinate (XZ1)  at (3.5,8,3);
\coordinate (YZ1)  at (0,12,3);
\coordinate (XYZ1) at (3.5,12,3);
\draw[black, thick] (X1)--(XY1)--(Y1);   
\draw[black, thick] (Z1)--(YZ1)--(Y1);          
\draw[black, thick] (X1)--(XZ1)--(XYZ1)--(XY1);       
\draw[black, thick] (Z1)--(XZ1); \draw[black, thick] (YZ1)--(XYZ1); 

\coordinate (o1)   at (-0.2,7.8,2.8);
\coordinate (x1)   at (3.7,7.8,2.8);
\coordinate (y1)   at (-0.2,12.2,2.8);
\coordinate (z1)   at (-0.2,7.8,7);
\coordinate (xy1)  at (3.7,12.2,2.8);
\coordinate (xz1)  at (3.7,7.8,7);
\coordinate (yz1)  at (-0.2,12.2,7);
\coordinate (xyz1) at (3.7, 12.2, 7);
\draw[black, thick] (x1)--(xy1)--(y1);   
\draw[black, thick] (z1)--(yz1)--(y1);          
\draw[black, thick] (x1)--(xz1)--(xyz1)--(xy1);       
\draw[black, thick] (z1)--(xz1); \draw[black, thick] (yz1)--(xyz1); 
\definecolor{lightgray}{rgb}{0.8,0.8,0.8}
\definecolor{darkgray}{rgb}{0.3,0.3,0.3}
    \fill[lightgray,opacity=0.5] (3.5,8,-1.5) -- (3.5,12,-1.5) -- (3.5,12,3) -- (3.5,8,3) -- cycle;
    \fill[lightgray,opacity=0.5] (0,12,-1.5) -- (3.5,12,-1.5) -- (3.5,12,3) -- (0,12,3) -- cycle;
    \fill[lightgray,opacity=0.5] (0,12,3) -- (3.5,12,3) -- (3.5,8,3) -- (0,8,3) -- cycle;

    \fill[darkgray,opacity=0.5] (3.7,7.8,2.8) -- (3.7,12.2,2.8) -- (3.7,12.2,7) -- (3.7,7.8,7) -- cycle;
    \fill[darkgray,opacity=0.5] (-0.2,12.2,2.8) -- (3.7,12.2,2.8) -- (3.7,12.2,7) -- (-0.2,12.2,7) -- cycle;
    \fill[darkgray,opacity=0.5] (-0.2,12.2,7) -- (3.7,12.2,7) -- (3.7,7.8,7) -- (-0.2,7.8,7) -- cycle;
\end{tikzpicture}
\end{center}
\caption{3D Gozinta Box: closed and expanded}
\label{fig:3dbox}
\end{figure}
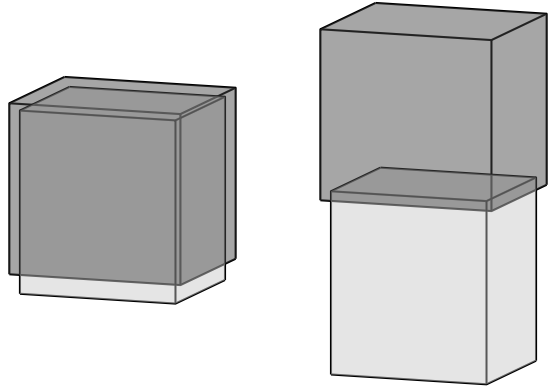

Figure~\ref{fig:cross-section} shows a 2D cross-section of two Gozinta Boxes when one box is inside the other. The outside box is shown using thicker lines.

\begin{figure}
\begin{center}
\begin{tikzpicture}[thick, scale=0.55]
  \draw[thick] (0,0) -- (0,3.6);
  \draw[thick] (0,3.6) -- (5.6,3.6);
  \draw[thick] (5.6,3.6) -- (5.6,0);

  \draw[thick] (0.3,0) -- (0.3,3);
  \draw[thick] (5.3,0) -- (0.3,0);
  \draw[thick] (5.3,3) -- (5.3,0);

  \def\yshift{-3.5}
  \draw[ultra thick] (3,3.8) -- (6,3.8);
  \draw[ultra thick] (6,3.8) -- (6,-0.2);
  \draw[ultra thick] (6,-0.2) -- (3,-0.2);

  \draw[ultra thick] (-0.2,-0.5) -- (-0.2,4.1);
  \draw[ultra thick] (3.4,-0.5) -- (-0.2,-0.5);
  \draw[ultra thick] (3.4,4.1) -- (-0.2,4.1);
\end{tikzpicture}
\end{center}
\caption{A cross-section of two Gozinta Boxes}
\label{fig:cross-section}
\end{figure}
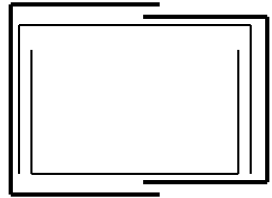

A pair of Gozinta Boxes was originally called Lubor Die, named after its creator Lubor Fiedler, and they were eventually given the name ``Gozinta'', which is an informal word derived from ``Goes-In-Da". They are sold under many other names, including Driebeck dice~\cite{SME} and In \& Out boxes~\cite{PM}. 

A three-box version of this trick was recently invented by Ivo David Oliveira. It is sold by TCC Magic under the name ``Triple Gozinta Boxes'' \cite{TCC}. There are three boxes: black, red, and white. Initially, the white box is inside the red box, which is inside the black box, implying that the white box is the smallest while the black box is the largest. Then, the boxes are put in reverse order: the black box inside the red box, which is inside the white box.

\subsection{Road Map}

We start with definitions and observations in Section~\ref{sec:io}. We initially assume that boxes are in two orders: the natural order and the reverse order. In Section~\ref{sec:whichsideexpands}, we discuss different options for which side may expand. If there are several boxes, we can assume that each box is closed in one of the orders and expanded in the other. We discuss different cases of boxes being closed and expanded in Section~\ref{sec:whichboxesareclosed}. We prove that it is impossible to have five or more Gozinta Boxes in any dimension.

In Section~\ref{sec:2D}, we concentrate on Gozinta Boxes in two dimensions. We show that in two dimensions, it is possible to have four Gozinta Boxes. In Section~\ref{sec:3D}, we move to three dimensions and show that four Gozinta Boxes are impossible in 3D. In Section~\ref{sec:ND}, we move to higher dimensions and show that four Gozinta Boxes are impossible.

In Section~\ref{sec:permutations}, we expand our discussion to considering not only the reverse order of the boxes but also other permutations we can achieve from the initial order. We show that starting from the natural order, all permutations for two or three boxes are possible. If there are 4 boxes, we show that all permutations except the reverse one are possible. We also discuss whether it might be possible to use the same set of boxes to achieve several permutations.

In Section~\ref{sec:FromTo}, we discuss how, given a set of Gozinta Boxes, to build another set with different dimensions that can perform the same trick. We apply the results in Section~\ref{sec:twoboxes}, where we divide pairs of Gozinta Boxes in 3D into four types.

\section{Initial observations}
\label{sec:io}

\subsection{Definitions}

We can view a closed Gozinta Box as a rectangular cuboid, a convex polyhedron with six rectangular faces. When we open a Gozinta Box, we get two parts; each is a rectangular cuboid with one face missing. When made in real life, the outer part is slightly larger than the inner one, but in this paper, we will ignore the difference. We also assume that the boxes have no thickness.

Note that if we can find a set of theoretical boxes with zero thickness, then it is possible to build real boxes with nonzero thickness: we can multiply the dimensions of all the boxes by a sufficiently large number to make the extra thickness of the boxes negligible.

We denote boxes with capital letters, for example, $A$, $B$, $C$, and $D$. We denote the side lengths of the closed boxes by the corresponding lowercase letters. For example, the side lengths of box $A$ are denoted $\vec{a} = (a_1,a_2,\ldots,a_n)$, where we assume that the side lengths are arranged in a non-decreasing order:
\[a_1 \leq a_2 \leq \dots \leq a_n.\] 
We also call the vector $\vec{a}$ the \textit{dimensions} of box $A$.

When side $a_i$ is expanded, we denote its length as $a'_i$. Recall that only one side of a Gozinta Box can expand. Also, when the side is expanded, the box should still look like a box. Thus, there is a limit to the expansion; namely, a side cannot expand to more than twice its original length, as this would cause both halves of the box to be separated. This gives us an \textit{expansion bound}:
\[a'_i \leq 2a_i.\]

Consider two $n$-dimensional vectors $\vec{a} = (a_1,a_2,\ldots,a_n)$ and $\vec{b} = (b_1,b_2,\ldots,b_n)$. We say that $\vec{a}$ \textit{dominates} $\vec{b}$ if $a_i \geq b_i$ for any $i$ in the range 1 to $n$, and we denote it as $\vec{a} \succeq \vec{b}$. The dimensions vector of the expanded box $A$ dominates $\vec{a}$. We say that $\vec{a}$ \textit{strictly dominates} $\vec{b}$ if $a_i > b_i$ for any $i$ in the range 1 to $n$, and we denote it as $\vec{a} \succ \vec{b}$. Vector $\vec{a}$ strictly dominates $\vec{b}$, if and only if box $B$ can fit inside closed box $A$.

\begin{example}
\label{ex:3-4-5}
In the original Gozinta Boxes, the two boxes are the same, with the dimensions of each box being roughly $(3,4,5)$. When the smallest side expands, the maximum size of each box is $(4,5,6)$, which can comfortably enclose the other closed box.
\end{example}

We sometimes use diagrams to show how boxes can fit into each other. In diagrams, we do not use the vector notation; we use the $\times$ symbol to separate dimensions. We also put boxes that are outside on top of boxes that are inside. To emphasize the expanded side, we put the original side length in parentheses. Example~\ref{ex:3-4-5} in diagram form will look like this:
\begin{center}
\begin{tabular}{r @{\,$\times$\,} r @{\,$\times$\,} l}
4 & 5 & 6(3) \\
3 & 4 & 5. \\
\end{tabular}
\end{center}

Recently, Ivo David Oliveira invented and started selling triple Gozinta Boxes. The dimensions in the example below are theoretical and are not intended to match the dimensions of the sold triple boxes.

\begin{example}
\label{ex:6-8-10}
We can have a box of dimensions $(3.5,4.5,5)$ that will fit between the two Gozinta Boxes in the example above. If we scale these dimensions, we can get all the boxes of integer sizes: two boxes of sizes $(6,8,10)$ and one box of size $(7,9,11)$:
\begin{center}
\begin{tabular}{r @{\,$\times$\,} c @{\,$\times$\,} l}
8 & 10 & 12(6) \\
7 & 9 & 11 \\
6 & 8 & 10.
\end{tabular}
\end{center}
\vspace{-2.05em}\qedhere
\end{example}

\subsection{Which side expands}
\label{sec:whichsideexpands}

It is natural to assume that the smallest side of the box should be the side that expands. However, not all boxes need to expand on the smallest side. Below is an example in which the middle side expands. This example also shows that the expanded side does not need to become the largest side.

\begin{example}
\label{ex:6-9-10}
Suppose we have a box $A$ with dimensions $(6, 9, 10)$ inside a box $B$ with dimensions $(7, 8, 11)$. In the first box, the smallest side needs to expand to make, for example, box $(9, 10, 12)$. In the second box, it could be the smallest or the middle side, making box $B$ expand to $(7, 11, 12)$ or $(8, 11, 12)$, respectively.
\end{example}

The following example has the largest side expanding. Moreover, considering limitations on the expansion, no other side can work.

\begin{example}
\label{ex:largestsideexpands}
Suppose box $A$ has dimensions $(5, 7, 999)$, expanding on the smallest dimension, and box $B$ has dimensions $(6, 8, 500)$. Then, in this case, box $B$ has to expand on the largest side:
\begin{center}
\noindent
\makebox[\textwidth][c]{
\begin{minipage}[t]{0.31\textwidth}
\centering
\begin{tabular}{c @{\hspace{0.5em}} r @{\,$\times$\,} c @{\,$\times$\,} l}
$B:$ & 6 & 8 & 1000(500) \\
$A:$ & 5 & 7 & 999          
\end{tabular}
\end{minipage}
\hspace{0.06\textwidth}  
\begin{minipage}[t]{0.31\textwidth}
\centering
\begin{tabular}{c @{\hspace{0.5em}} r @{\,$\times$\,} c @{\,$\times$\,} l}
$A:$ & 7 & 9(5) & 999  \\
$B:$ & 6 & 8 & 500.  
\end{tabular}
\end{minipage}
}
\end{center}
\vspace{-2.05em}\qedhere
\end{example}

Another common assumption is that when a side expands, that side has to become the largest side of the box. However, in the previous example, side $a_1$ expands to become the second-largest side. Also, notice that in this example, any side of $B$ can expand.

\begin{example}
\label{ex:5-7-11versus6-8-10}
Suppose box $A$ has dimensions $(5, 7, 11)$ and box $B$ has dimensions $(6, 8, 10)$. In the natural order, any side of box $B$ can expand to 12:
\begin{center}
\noindent
\makebox[\textwidth][c]{
\begin{minipage}[t]{0.31\textwidth}
\centering
\begin{tabular}{c @{\hspace{0.5em}} r @{\,$\times$\,} c @{\,$\times$\,} l}
$B:$ & 8 & 10 & 12(6) \\
$A:$ & 5 & 7 & 11.
\end{tabular}
\end{minipage}
\hspace{0.06\textwidth}
\begin{minipage}[t]{0.31\textwidth}
\centering
\begin{tabular}{c @{\hspace{0.5em}} r @{\,$\times$\,} c @{\,$\times$\,} l}
$B:$ & 6 & 10 & 12(8) \\
$A:$ & 5 & 7 & 11
\end{tabular}
\end{minipage}
\hspace{0.06\textwidth}  
\begin{minipage}[t]{0.31\textwidth}
\centering
\begin{tabular}{c @{\hspace{0.5em}} r @{\,$\times$\,} c @{\,$\times$\,} l}
$B:$ & 6 & 8 & 12(10) \\
$A:$ & 5 & 7 & 11.
\end{tabular}
\end{minipage}
}
\end{center}
In the reverse order, side $a_1$ can expand to 9:
\begin{center}
\begin{tabular}{c @{\hspace{0.5em}} r @{\,$\times$\,} c @{\,$\times$\,} l}
$A:$ & 7 & 9(5) & 11 \\
$B:$ & 6 & 8 & 10.
\end{tabular}
\end{center}
\vspace{-2.05em}\qedhere
\end{example}

We can also provide an example where $a_1$ can expand and stay the smallest. The following example has $2a_1 < a_2$, which means that if side $a_1$ expands, it has to stay the smallest.

\begin{example}
\label{ex:5-11-13versus7-10-12}
Suppose box $A$ has dimensions $(5, 11, 13)$ and box $B$ has dimensions $(7, 10, 12)$. In the natural order, box $B$ expands side $b_1$ to 14, and in the reverse order, $a_1$ expands to 8.
\end{example}

These examples challenged our intuition. Still, our initial intuition is not completely off.

\begin{proposition}
If there are $n$ boxes that can fit into each other in the natural and the reverse order, then at least $n-1$ of them have to expand on the smallest side.
\end{proposition}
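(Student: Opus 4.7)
The plan is to classify each box by which side it expands on, and to show that the ``non-smallest-side expander'' type can contain at most one box out of the $n$.

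First I would establish a one-line lemma: if a box with sorted dimensions $x_1 \le x_2 \le \cdots \le x_n$ expands on side $k > 1$ (to some length $x'_k \le 2x_k$), then the smallest dimension of the expanded box is still $x_1$. This is immediate from the sorting assumption, since the multiset of side lengths after expansion still contains $x_1$, and $x'_k \ge x_k \ge x_1$. By contrast, if $k = 1$, the new smallest dimension is $\min(x'_1, x_2) \ge x_1$, and can be strictly larger.

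Now I invoke the standing assumption from Section~\ref{sec:whichboxesareclosed} that each box is closed in exactly one of the two orders and expanded in the other. Call a box \emph{off-smallest} if the side it expands on is not its smallest side. By the lemma above, for any off-smallest box $X_i$ the first (smallest) coordinate of its sorted dimension vector equals $x^{(i)}_1$ in both configurations, closed and expanded. Suppose for contradiction that two distinct boxes $X_i$ and $X_j$ with $i < j$ are both off-smallest. In the natural order, $X_i$ sits inside $X_j$, so strict dominance of their current dimension vectors forces, on the first coordinate, $x^{(i)}_1 < x^{(j)}_1$. In the reverse order, $X_j$ sits inside $X_i$, and the same argument on the first coordinate gives $x^{(j)}_1 < x^{(i)}_1$. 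This is a contradiction, so at most one box is off-smallest, hence at least $n-1$ boxes expand on their smallest side.

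I don't expect a real obstacle in this argument. The only thing to be careful about is that although a box's ``current'' smallest coordinate could \emph{a priori} differ from the closed value $x^{(i)}_1$ (namely when the smallest side itself expands and $x'_1 > x_2$), the whole point of restricting attention to off-smallest boxes is that this complication disappears and both configurations have the same value $x^{(i)}_1$ as their minimum. Everything else is bookkeeping about which box is inside which in each of the two orders.
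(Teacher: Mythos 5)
Your proof is correct and rests on the same essential comparison as the paper's: both arguments reduce to the observation that a box which does not expand its smallest side keeps $x_1$ as its minimum coordinate in every configuration, so the two nesting orders force contradictory strict inequalities between the first coordinates of two such boxes. The paper phrases this directly (for each pair, the box holding the globally smallest side must expand it), while you phrase it contrapositively via the ``off-smallest'' classification and the standing closed/expanded convention, but the content is the same.
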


\begin{proof}
Consider any two boxes $A$ and $B$. In one of the orders, $A$ is inside $B$, and in the other order, $B$ is inside $A$. There may be other boxes in between. Suppose $\ell$ is the length of the smallest side among all six sides of these two boxes. Without loss of generality, assume that box $A$ has side $\ell$, in other words, $a_1 = \ell$. Then box $B$, either closed or expanded, needs to fit inside $A$, which means that side $a_1$ needs to expand.

For every pair of boxes, the smallest side among all of the sides must expand. Thus, all boxes, except the one with the largest smallest side length, must expand on the smallest side.
\end{proof}

\subsection{Which boxes are closed}
\label{sec:whichboxesareclosed}

Until Section~\ref{sec:permutations}, we assume that boxes are in two orders: the natural order and the reverse order. For example, when we have three boxes, the natural order is when $A$ is inside $B$ and $B$ is inside $C$. The reverse order is when $C$ is inside $B$ and $B$ is inside $A$. In Section~\ref{sec:permutations}, we look at other permutations of the boxes.

Each box has two sets of dimensions: the first set corresponds to the side lengths when the boxes are in natural order, and the second set corresponds to the side lengths when the boxes
are in reverse order (or another permutation). If a box has two different sets of dimensions, consider the dimensions that produce a smaller volume. Without loss of generality, we can assume that this set of dimensions corresponds to the closed box. The other set of dimensions corresponds to the expanded box. If a box has the same dimensions in both orders, it is natural to assume that this box is closed in both orders. In Example~\ref{ex:6-8-10}, the box with dimensions $(7,9,11)$ does not need to be expanded in either order. However, to reduce the number of cases we study in this section, we can always assume that in one of these orders, we can expand the box by a tiny amount. This leads to the following observation.

\textbf{Observation.} We can assume that each box is closed in one order and expanded in the other.

We can also assume that the innermost box is closed. Our observation implies that the outermost box is always expanded.

There is a connection between which boxes are closed and which sides expand.

\begin{lemma}
\label{lem:closedexpands}
If a box $A$ is inside a closed box $B$ in one order, then the smallest side of $A$ has to expand to become the largest when the order of the boxes is reversed.
\end{lemma}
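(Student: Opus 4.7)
The plan is to first pin down the key strict dominations between the sorted dimension vectors of $A$ and $B$ in the two orders, and then deduce what the single side-length change of $A$ must be. Since $A$ is inside the closed box $B$ in the given order, the dimensions of $A$ in that order (closed or expanded) are strictly dominated by $\vec{b}$; combined with the fact that a single-side expansion only raises each sorted entry (increasing one entry of a sorted vector and re-sorting can never lower any position), this gives $\vec{a} \prec \vec{b}$. In the reversed order, $B$ (closed or expanded) must fit inside $A$, so the dimensions of $A$ in that order strictly dominate $\vec{b}$; since $\vec{a} \prec \vec{b}$, the reversed-order $A$ must be in its expanded state, giving the two strict dominations $\vec{a} \prec \vec{b} \prec \vec{a}'$.

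Next I would identify which side is expanded. Suppose $A$ expands some side $a_k$ with $k \geq 2$. Then $a_1$ is still a side length of the expanded box, so after re-sorting, the smallest entry of $\vec{a}'$ is at most $a_1$, which is strictly less than $b_1$, contradicting $\vec{a}' \succ \vec{b}$. Hence the expanded side must be $a_1$, the smallest one.

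Finally, I would show that the new value $a'_1$ becomes the largest side of expanded $A$. After expanding $a_1$ only, the multiset of side lengths is $\{a'_1, a_2, \ldots, a_n\}$, whose maximum is $\max(a'_1, a_n)$. This maximum must strictly exceed $b_n$; since $a_n < b_n$, we must have $a'_1 > b_n > a_n$, so $a'_1$ is the new largest side. There is no real obstacle here — the one point to be careful about (which is essentially the content of the lemma) is that ``smallest'' and ``largest'' refer to positions in the sorted dimension vector, so the re-sorting after expanding a single entry is essential to the statement.
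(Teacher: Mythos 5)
Your proof is correct and follows essentially the same route as the paper's: establish $\vec{a}\prec\vec{b}\prec\vec{a}'$, observe that an unexpanded $a_1$ would stay below $b_1$ and block the reversed-order fit, and then note that the need for a side exceeding $b_n$ forces $a'_1$ to become the new largest entry. You are somewhat more explicit than the paper about whether $A$ is closed or expanded in each order and about the effect of re-sorting, but the substance of the argument is identical.
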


\begin{proof}
If box $A$ is inside a closed box $B$ in some order, then $\vec{b} \succ \vec{a}$, implying that $a_i < b_i$ for any $i$. It follows that $a_1 < b_i$ for any $i$, which means $a_1$ needs to expand, but we also need to have a side bigger than $b_3$ in the expanded version of $A$, and $b_3 > a_3$. Thus, the expanded box $A$ has sorted sides $a_2 \leq a_3 \leq a'_1$. Moreover, we get the following inequality chain as a result:
\[a_1 < b_1 < a_2 < b_2 < a_3 < b_3 < a'_1.\qedhere\]
\end{proof}

Let us denote a closed box by $C$ and an expanded box by $E$.

\begin{theorem}
\label{thm:no5}
It is impossible to have a set of 5 Gozinta Boxes in any dimension $n \ge 2$. Also, if there is a set of four boxes, the boxes must follow the pattern CCEE in both the natural and the reverse orders. 
\end{theorem}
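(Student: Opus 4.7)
The plan is to proceed in two stages: first identify the only admissible shape of the C/E pattern in each order (a string $C^{k}E^{m-k}$ read from the innermost box outward, where $m$ is the number of boxes), and then apply Lemma~\ref{lem:closedexpands} to force both $k\le 2$ and $m-k\le 2$.

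For the first stage I would prove a ``closed-is-hereditary'' claim: if $A\subset B$ in some order and $B$ is closed there, then $A$ is closed there too. If $A$ were expanded, then $\vec{a}\prec\vec{a}'\prec\vec{b}$ gives $\vec{a}\prec\vec{b}$; but in the other order $A$ is closed and $B$ is expanded with $B\subset A$, which demands $\vec{b}'\prec\vec{a}$, contradicting $\vec{b}'\succ\vec{b}\succ\vec{a}$. Hence the closed boxes in each order form an inner prefix of the nesting, so the pattern is $C^{k}E^{m-k}$. Because every box swaps status when we pass to the reverse order, a natural-order shape $C^{k}E^{m-k}$ corresponds to a reverse-order shape $C^{m-k}E^{k}$ read from the new innermost box $A_m$.

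For the second stage, suppose toward contradiction that $k\ge 3$, so that $A_1\subset A_2\subset A_3$ are all closed in the natural order. Applying Lemma~\ref{lem:closedexpands} to $A_1\subset A_2$ produces the interleaving $a_{1,1}<a_{2,1}<a_{1,2}<a_{2,2}<\cdots$ and forces $A_1$'s smallest side to expand, in the reverse order, into $A_1$'s largest side; applying the lemma to $A_2\subset A_3$ does the same for $A_2$. Hence in the reverse order the sorted dimensions of $A_1$ begin with $a_{1,2}$, and those of $A_2$ begin with $a_{2,2}$. The containment $A_2\subset A_1$ in the reverse order then demands $a_{2,2}<a_{1,2}$, which contradicts the natural-order chain $a_{1,2}<a_{2,2}$. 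So $k\le 2$. Running the symmetric argument in the reverse order, using its three innermost closed boxes $A_m, A_{m-1}, A_{m-2}$, yields $m-k\le 2$. Together these give $m\le 4$, with $k=2$ forced when $m=4$; that is the pattern CCEE in both orders.

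I expect the main subtlety to be Stage~1's hereditary claim, which has to compare closed and expanded dimensions across two different orders for arbitrary (not necessarily nesting-adjacent) pairs $A,B$. Once that is in place, Stage~2 is essentially bookkeeping with the interleaving chain of Lemma~\ref{lem:closedexpands}, and the hypothesis $k\ge 3$ is used exactly to ensure that a closed outer box ($A_3$) sits outside $A_2$ as well as outside $A_1$, so that Lemma~\ref{lem:closedexpands} applies to both and pins down their smallest sides as the ones that expand.
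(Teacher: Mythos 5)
Your proof is correct, and its engine is the same as the paper's: three boxes that are nested and all closed in one order are impossible, because Lemma~\ref{lem:closedexpands} applied to the two adjacent pairs forces the smallest sides of the two inner boxes to expand into the largest, so that in the other order their smallest dimensions are $a_2$ and $b_2$, yielding $a_2<b_2$ and $b_2<a_2$ simultaneously. Where you differ is in how the case analysis around that contradiction is organized. The paper obtains ``three closed boxes in some order'' for five boxes by pigeonhole, and for four boxes it notes that the innermost box is closed and the outermost expanded, enumerates the surviving patterns CECE and CCEE, and kills CECE with a separate containment-both-ways (volume) argument. Your Stage-1 hereditary claim --- a box nested inside a closed box must itself be closed in that order --- establishes the prefix structure $C^kE^{m-k}$ once and for all; it subsumes the pigeonhole step (since $k\le 2$ and $m-k\le 2$ give $m\le 4$ directly), renders the innermost/outermost observations unnecessary, and excludes CECE without a dedicated argument. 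Note, though, that the proof of the hereditary claim is essentially the paper's CECE argument in disguise: both rest on comparing the containments of the closed versions of the two boxes across the two orders (your $\vec b\succ\vec a$ versus $\vec a\succ\vec b'$ is their volume contradiction in domination form). So the gain is organizational rather than substantive, but it is a cleaner decomposition, and it generalizes immediately to any number of boxes without re-enumerating patterns. (Two harmless notational slips: $\vec a\prec\vec a'$ and $\vec b'\succ\vec b$ should be $\preceq$ and $\succeq$, which does not affect the strictness of the composite inequalities you need.)
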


\begin{proof}
Suppose we have a set of more than 4 Gozinta Boxes. By the pigeonhole principle, there must be at least $3$ boxes that are all closed in either the natural order or the reversed order. Without loss of generality, assume that three boxes $A$, $B$, and $C$ are all closed in the natural order, where $A$ is inside $B$ inside $C$. It follows that $\vec{a} \prec \vec{b} \prec \vec{c}$. In particular, we have
\[a_1 < b_1 <c _1 \quad \textrm{ and } \quad a_2 < b_2 < c_2.\]

In the reverse order, box $B$ has to fit inside of $A$. By Lemma~\ref{lem:closedexpands}, side $a_1$ expands to be the largest dimension of $A$. Similarly, box $C$ has to fit inside $B$. The only way to do this is if $b_1$ expands to be the largest dimension of $B$. Thus, in reverse order, the smallest dimension of $A$ is $a_2$, and the smallest dimension of $B$ is $b_2$. As $B$ must fit inside $A$, we have $b_2 < a_2$, contradicting the previous inequality.

Consider 4 boxes. As we mentioned before, we can assume that the innermost box is always closed and the outermost box is always expanded. In addition, from the argument above, we cannot have 3 closed boxes in any order. Thus, we have two options left for the natural order:
\begin{enumerate}
	\item $CECE$: Boxes $A$ and $C$ are closed.
	\item $CCEE$: Boxes $A$ and $B$ are closed, and boxes $C$ and $D$ are expanded.
\end{enumerate}

Consider the first case. Since in the natural order, closed $C$ contains expanded $B$, then closed $B$, being smaller than expanded $B$, must also fit inside closed $C$. A similar argument works for the reverse order, implying that closed $C$ fits inside closed $B$. This creates a contradiction, as the first part implies that the volume of closed $B$ is smaller than the volume of closed $C$, while the second part implies the opposite. This case is impossible, leaving $CCEE$ as the only remaining case.
\end{proof}

\section{2D Gozinta Boxes}
\label{sec:2D}

Magicians have a 2D trick called \textit{Gozinta envelopes} \cite{PP}, available for purchase at Funtime Magic \cite{FM}. There are two envelopes, red and purple, with a black border. The envelopes are of the same size. The magician can seemingly put the red envelope inside the purple one and the purple envelope inside the red one. However, this is not the case. When the red envelope is inside the purple one, one side sticks out. The magician obscures this fact, and the black border helps with this illusion. Gozinta envelopes are not a true variation of Gozinta Boxes.

In this section, we discuss 2D Gozinta Boxes, which we call \textit{Gozinta Rectangles}. Figure~\ref{fig:2d} shows a closed and expanded rectangle.

\begin{figure}[ht!]
\begin{center}
\begin{tikzpicture}[thick, scale=0.55]

\definecolor{lightgray}{rgb}{0.8,0.8,0.8}
\definecolor{darkgray}{rgb}{0.3,0.3,0.3}

\begin{scope}[xshift=0cm]
  \fill[lightgray, opacity=0.5] (0.3,-0.3) -- (0.3,3.0) -- (5.3,3.0) -- (5.3,-0.3) -- cycle;
  \fill[darkgray, opacity=0.5] (0,0) -- (0,3.6) -- (5.6,3.6) -- (5.6,0) -- cycle;

  \draw[black, thick] (0.3,-0.3) -- (0.3,3.0);
  \draw[black, thick] (5.3,-0.3) -- (0.3,-0.3);
  \draw[black, thick] (5.3,3.0) -- (5.3,-0.3);

  \draw[black, thick] (0,0) -- (0,3.6);
  \draw[black, thick] (0,3.6) -- (5.6,3.6);
  \draw[black, thick] (5.6,3.6) -- (5.6,0);
\end{scope}

\begin{scope}[xshift=8cm, yshift=1.2cm]
  \fill[lightgray, opacity=0.5] (0.3,-2.9) -- (0.3,0.4) -- (5.3,0.4) -- (5.3,-2.9) -- cycle;
  \fill[darkgray, opacity=0.5] (0,0) -- (0,3.6) -- (5.6,3.6) -- (5.6,0) -- cycle;

  \draw[black, thick] (0.3,-2.9) -- (0.3,0.4);
  \draw[black, thick] (5.3,-2.9) -- (0.3,-2.9);
  \draw[black, thick] (5.3,0.4) -- (5.3,-2.9);

  \draw[black, thick] (0,0) -- (0,3.6);
  \draw[black, thick] (0,3.6) -- (5.6,3.6);
  \draw[black, thick] (5.6,3.6) -- (5.6,0);
\end{scope}

\end{tikzpicture}
\end{center}
\caption{2D Gozinta Rectangle: closed and expanded}
\label{fig:2d}
\end{figure}
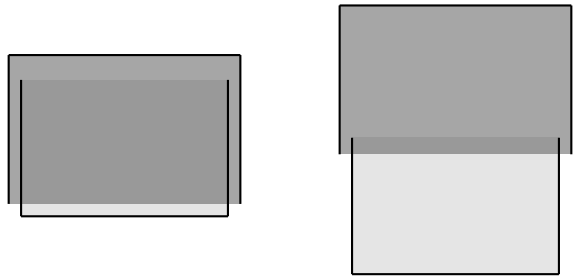

Here, we present an example of Gozinta Rectangles which are a 2D version of 3D Example~\ref{ex:6-8-10} with the largest side ignored.

\begin{example}
We have two rectangles $A$ and $C$ that are $(6,8)$ when closed, and the smallest side expands to get dimensions $(8,10)$ when expanded. Rectangle $B$ is $(7,9)$. When we consider $A$ and $C$ separately, we get two rectangles that are the same; both of them expand on the smallest side. When we consider rectangles $A$ and $B$, we get a case where one of the rectangles is closed in both configurations.
\end{example}

Here is an example of 4 Gozinta Rectangles.

\begin{example}
Let the four rectangles $A$, $B$, $C$, and $D$ have dimensions $(8,14)$, $(9,15)$, $(11,13)$, and $(10,12)$, correspondingly. All rectangles except for $C$ expand on the smaller side, and $C$ expands on the larger side. Our two configurations are below, with the left configuration in the natural order and the right one in the reverse order:
\begin{center}
\begin{minipage}{0.31\textwidth}
\centering
\begin{tabular}{c @{\hspace{0.5em}} r @{\,$\times$\,} l }
$D:$ & 12 & 20(10) \\
$C:$ & 11 & 16(13) \\
$B:$ &  9 & 15 \\
$A:$ &  8 & 14
\end{tabular}
\end{minipage}
\hspace{0.06\textwidth}  
\begin{minipage}{0.31\textwidth}
\centering
\begin{tabular}{c @{\hspace{0.5em}} r @{\,$\times$\,} l }
$A:$ & 14 & 16(8) \\
$B:$ & 12(9) & 15 \\
$C:$ & 11 & 13 \\
$D:$ & 10 & 12.
\end{tabular}
\end{minipage}
\end{center}
\vspace{-2.05em}\qedhere
\end{example}

Examples with identical boxes/rectangles are more elegant, and thus are desired. In both sets (a pair and a triple) of Gozinta Boxes used in magic, the outermost and the innermost boxes in the natural order are the same. Here is an example of such a case for four Gozinta Rectangles.

\begin{example}
It is possible for rectangles $A$ and $D$ in a set of four Gozinta Rectangles to have the same dimensions. We show the diagrams below, where the natural order is on the left, and the reverse order is on the right. Here, both rectangles $A$ and $D$ have dimensions $(9,13)$:
\begin{center}
\begin{minipage}{0.31\textwidth}
\centering
\begin{tabular}{c @{\hspace{0.5em}} r @{\,$\times$\,} l }
$D:$ & 13 & 16(9) \\
$C:$ & 12(10) & 15 \\
$B:$ &  11 & 14 \\
$A:$ &  9 & 13
\end{tabular}
\end{minipage}
\hspace{0.06\textwidth}  
\begin{minipage}{0.31\textwidth}
\centering
\begin{tabular}{c @{\hspace{0.5em}} r @{\,$\times$\,} l }
$A:$ & 13 & 17(9)\\
$B:$ & 11 & 16(14)\\
$C:$ & 10 & 15 \\
$D:$ & 9 & 13.
\end{tabular}
\end{minipage}
\end{center}
\vspace{-2.05em}\qedhere
\end{example}

To get to full symmetry, one might want to have $A=D$ and $B=C$. However, this is impossible. Moreover, rectangle $B$ cannot be the same as rectangle $C$ in a set of four Gozinta Rectangles.

\begin{proposition}
If we have a set of 4 Gozinta Rectangles, then the two middle rectangles cannot be the same.
\end{proposition}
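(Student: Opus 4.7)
The plan is to apply Theorem~\ref{thm:no5} to pin down the closed/expanded pattern, and then show that the hypothesis $B=C$ forces an incompatible requirement on the outer inclusions. Let $(s_1,s_2)$ with $s_1 \le s_2$ be the common closed dimensions of $B$ and $C$, and let $(d_1,d_2)$ with $d_1\le d_2$ be the closed dimensions of $D$.

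First, by Theorem~\ref{thm:no5} the configuration must be CCEE in both orders. Hence in the natural order $A$ and $B$ are closed while $C$ and $D$ are expanded, and in the reverse order $D$ and $C$ are closed while $B$ and $A$ are expanded. Focus on the natural-order step $\vec b \prec \vec c_{\text{exp}}$, where $\vec b = (s_1,s_2)$ is closed. Since $C$ has the same sorted closed dimensions $(s_1,s_2)$, enumerating the two ways to expand a side of $C$ and then re-sorting shows that strict domination of $(s_1,s_2)$ is only possible in one case: $s_1<s_2$ strictly, $C$ expands its shorter side to some value $s'_1>s_2$, and the sorted dimensions of $C_{\text{exp}}$ become $(s_2,s'_1)$. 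In every other expansion the smaller sorted entry of $C_{\text{exp}}$ equals $s_1$ or the larger one equals $s_2$, so strict domination fails. In particular, the smaller sorted coordinate of $C_{\text{exp}}$ is exactly $s_2$.

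Now derive the contradiction from $\vec c_{\text{exp}}\prec \vec d_{\text{exp}}$ in the natural order. This requires the smaller sorted coordinate of $D_{\text{exp}}$ to be strictly greater than $s_2$. On the other hand, in the reverse order $D$ is closed and lies strictly inside closed $C=(s_1,s_2)$, so $d_1<s_1$ and $d_2<s_2$. Expanding either side of $(d_1,d_2)$ and re-sorting produces two numbers whose smaller entry is at most $\max(d_1,d_2)=d_2<s_2$ (indeed, whichever side is expanded, after sorting the unexpanded side is still present and is at most $d_2$). This contradicts the required lower bound $>s_2$, and the proof is complete.

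The main obstacle is the middle case analysis: recognizing that the equality $B=C$ is rigid enough to force $C$ to expand specifically on its shorter side and carry it past $s_2$, so that the smaller sorted coordinate of $C_{\text{exp}}$ is pinned to the value $s_2$. Once this is established, the collision with the hard upper bound $d_2<s_2$ on $D$'s closed dimensions is automatic, because no single-side expansion of $(d_1,d_2)$ can move both sorted coordinates above $s_2$.
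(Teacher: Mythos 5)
Your proof is correct, and it rests on the same two pillars as the paper's: Theorem~\ref{thm:no5} forces the CCEE pattern, and the hypothesis $B=C$ rigidly pins how the duplicated rectangle can expand (its shorter side must expand past $s_2$, so the smaller sorted coordinate of the expanded copy is exactly $s_2$). The only difference is which neighbor delivers the contradiction: the paper compares the duplicated pair against the innermost rectangle $A$ (getting $a_2<b_2$ from the natural order and $b_2<a_2$ from the reverse order), whereas you compare against the outermost rectangle $D$ (its smaller sorted coordinate must exceed $s_2$ in the natural order, yet $d_2<s_2$ from the reverse order caps any single-side expansion below $s_2$). The two arguments are mirror images under reversing the order, and neither buys anything the other lacks; your version does have the small bonus that it never needs to discuss rectangle $A$'s expansion at all.
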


\begin{proof}
By Theorem~\ref{thm:no5}, we only need to consider one case, depending on which rectangles are expanded in the natural order: CCEE.

If rectangles $B$ and $C$ are the same, we need the smallest side of each of them to expand and become the largest in reverse order. We also know that $a_1$ needs to expand as it is smaller than both $b_1$ and $b_2$. In addition, side $a_1$ needs to become the largest side of all the rectangles, so rectangle $A$, when expanded, is $(a_2,a'_1)$. Thus, we can draw the following diagrams, where the natural order is on the left, and the reverse is on the right, and rectangle $D$ is excluded:
\begin{center}
\begin{minipage}{0.31\textwidth}
\centering
\begin{tabular}{c @{\hspace{0.5em}} r @{\,$\times$\,} l }
$C:$ & $b_2$ & $b'_1$ \\
$B:$ & $b_1$ & \boldmath$b_2$ \\
$A:$ & $a_1$ & \boldmath$a_2$
\end{tabular}
\end{minipage}
\hspace{0.06\textwidth}  
\begin{minipage}{0.31\textwidth}
\centering
\begin{tabular}{c @{\hspace{0.5em}} r @{\,$\times$\,} l }
$A:$ & \boldmath$a_2$ & $a'_1$ \\
$B:$ & \boldmath$b_2$ & $b'_1$ \\
$C:$ & $b_1$ & $b_2$.
\end{tabular}
\end{minipage}
\end{center}

Now we get a contradiction, which is bolded in the diagram. From the natural order, we have $a_2 < b_2$, and from the reverse order, we get $b_2 > a_2$.
\end{proof}

\section{3D Gozinta Boxes}
\label{sec:3D}

In the previous section, we saw that it is possible to have 4 Gozinta Boxes in 2D. What about 3D?

\begin{theorem}
Four Gozinta Boxes in 3D are impossible.
\end{theorem}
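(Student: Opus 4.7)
The plan is to invoke Theorem~\ref{thm:no5} to reduce to the CCEE pattern: in the natural order we have $A \subset B \subset C \subset D$ with $A, B$ closed and $C, D$ expanded, and in the reverse order $D \subset C \subset B \subset A$ with $D, C$ closed and $B, A$ expanded. Applying Lemma~\ref{lem:closedexpands} to the nested closed pair $A \subset B$ in the natural order, and symmetrically to $D \subset C$ in the reverse order, yields two strict inequality chains
\[
a_1 < b_1 < a_2 < b_2 < a_3 < b_3 < a'_1, \qquad d_1 < c_1 < d_2 < c_2 < d_3 < c_3 < d'_1,
\]
where the sorted dimensions of expanded $A$ are $(a_2, a_3, a'_1)$ and of expanded $D$ are $(d_2, d_3, d'_1)$.

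Next I would extract the four remaining strict domination relations from the mixed closed/expanded containments: $\vec b \prec \text{sorted exp. } C \prec (d_2, d_3, d'_1)$ in the natural order, and $\vec c \prec \text{sorted exp. } B \prec (a_2, a_3, a'_1)$ in the reverse order. A short case check on which side of $C$ expands, using chain~2 together with the outer bound $(d_2, d_3, d'_1)$, shows that the sorted expanded $C$ must be one of $(c'_1, c_2, c_3)$ with $c'_1 \le c_2$, $(c_1, c'_2, c_3)$ with $c'_2 \le c_3$, or $(c_1, c_2, c'_3)$; any other option causes the sorted expanded $C$ to fail domination by $(d_2, d_3, d'_1)$, since chain~2 gives $c_2 > d_2$ and $c_3 > d_3$.

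The core step is to rule out every expansion of $B$ against each of these three cases for $C$. The observation driving the contradictions is that chain~1 forces $a_2 < b_2$ and $a_3 < b_3$, so whenever the sorted expanded $B$ has $b_2$ as its smallest entry the bound $\prec (a_2, a_3, a'_1)$ fails, and whenever it has $b_3$ as its middle entry the same bound fails. Combining this with (N1), which forces either $b_3 < c_3$ (in the first two cases for $C$) or $b_1 < c_1$ together with $b_2 < c_2$ (in the $c_3$-expansion case for $C$), a direct check of the three possible expanded sides of $B$, each split by whether the expanded side stays smallest, becomes middle, or becomes largest, closes every sub-case; typically the largest of expanded $B$ is $b_3 < c_3$ and thus cannot dominate $\vec c$, or the middle is $b_2 < c_2$ for the same reason, or the smallest is forced up to $b_2 > a_2$ breaking the outer bound.

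The main obstacle is not conceptual but organizational: there are roughly nine combined $C$-by-$B$ sub-cases, and one must keep straight which among (N1), (R1), (R2), chain~1, or chain~2 furnishes the contradiction in each. A clean write-up would probably tabulate the sorted expanded $B$ and expanded $C$ vectors in each sub-case and flag the single inequality that fails.
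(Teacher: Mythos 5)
Your proposal is correct and follows essentially the same route as the paper: reduce to the CCEE pattern via Theorem~\ref{thm:no5}, apply Lemma~\ref{lem:closedexpands} to $A$ and $D$, and then observe that the sorted side order of each of $B$ and $C$ must be preserved under expansion because $b_2>a_2$, $b_3>a_3$, $c_2>d_2$, $c_3>d_3$. The only real difference is the final step: where you propose roughly nine $B$-by-$C$ sub-cases, the paper notes that since each of $B$ and $C$ expands at most one side, the pigeonhole principle yields an index $j$ at which neither expands, giving $b_j<c_j$ from the natural order and $c_j<b_j$ from the reverse order --- a one-line contradiction that collapses your entire case table.
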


\begin{proof}
From Theorem~\ref{thm:no5}, we know that if 4 boxes exist, then in the natural and reverse orders, the pattern must be CCEE. In other words, in the natural order, the two innermost boxes are closed $A$ inside closed $B$, while in the reverse order, the two innermost boxes are closed $D$ inside closed $C$. By Lemma~\ref{lem:closedexpands}, the smallest sides of $A$ and $D$ must expand to become the largest. Thus, $A$ expands to $a_2 \leq a_3 \leq a'_1$ and $D$ expands to $d_2 \leq d_3 \leq d'_1$.

Now we consider box $B$. When it is expanded, its smallest side has to be smaller than $a_2$. However, from the natural order of the boxes, we know that $b_2 > a_2$. This means the smallest side of the expanded box $B$ has to be either $b_1$ or expanded $b_1$, denoted as $b_1'$. Similarly, the second smallest side is $b_2$ or $b_2'$, and the largest side is $b_3$ or $b_3'$. In other words, although we do not know which side expands, the relative order among the sides is preserved after the expansion. A similar argument applies to box $C$.

We represent the natural order as the diagram on the left and the reverse order as the one on the right:
\begin{center}
\makebox[\textwidth][c]{
\begin{minipage}[t]{0.31\textwidth}
\centering
\begin{tabular}{c @{\hspace{0.5em}} r @{\,$\times$\,} c @{\,$\times$\,} l}
$D:$ & $d_2$      & $d_3$      & $d_1'$ \\
$C:$ & $c_1/c_1'$ & $c_2/c_2'$ & $c_3/c_3'$ \\
$B:$ & $b_1$      & $b_2$      & $b_3$ \\
$A:$ & $a_1$      & $a_2$      & $a_3$ 
\end{tabular}
\end{minipage}
\hspace{0.06\textwidth}  
\begin{minipage}[t]{0.31\textwidth}
\centering
\begin{tabular}{c @{\hspace{0.5em}} r @{\,$\times$\,} c @{\,$\times$\,} l}
$A:$ & $a_2$      & $a_3$      & $a_1'$ \\
$B:$ & $b_1/b_1'$ & $b_2/b_2'$ & $b_3/b_3'$ \\
$C:$ & $c_1$      & $c_2$      & $c_3$ \\
$D:$ & $d_1$      & $d_2$      & $d_3$.
\end{tabular}
\end{minipage}
} 
\end{center}

With three coordinates and at most one expanding side per box, the pigeonhole principle implies that for some $1\le j \le 3$, neither $b_j$ nor $c_j$ expands, so we have both $b_j < c_j$ and $c_j < b_j$, reaching a contradiction. Thus, $4$ or more boxes are impossible in 3D.
\end{proof}

\section{nD Gozinta Boxes}
\label{sec:ND}

We know that the maximum number of boxes in 2D is 4, and in 3D is 3. What about higher dimensions?

\begin{proposition}
\label{prop:removeddimension}
If there exist $k$ Gozinta Boxes in $n$ dimensions, then there exist $k$ Gozinta Boxes in $n-1$ dimensions.
\end{proposition}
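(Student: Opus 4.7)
My plan is to project each $n$-dimensional box onto its smallest $n-1$ sorted coordinates. Concretely, for each of the given boxes $B^{(i)}$ with sorted closed dimensions $\vec{b}^{(i)} = (b_1^{(i)},\dots,b_n^{(i)})$ and sorted dimensions $\vec{b}_N^{(i)}$, $\vec{b}_R^{(i)}$ in the natural and reverse orders, I would associate an $(n-1)$-dimensional counterpart obtained by truncating each sorted vector to its first $n-1$ entries. I would then show that this truncated family forms a valid system of $k$ Gozinta Boxes in dimension $n-1$.

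The main step is verifying that, for each box, the truncated ``expanded'' vector is a legitimate expansion of the truncated ``closed'' vector---i.e., a single physical side enlarged by a factor of at most two. This requires a small case analysis. Let $s_k$ denote the physical side of $B^{(i)}$ that expands from $s_k$ to $s_k' \le 2s_k$ in the configuration under consideration. If $s_k$ is already the largest side in the closed configuration, then it remains the largest after expansion, and truncation removes it from both sorted vectors, leaving the $(n-1)$-dim box unexpanded. If $s_k$ is not the largest and $s_k' \le b_n^{(i)}$, then $b_n^{(i)}$ is the sorted maximum in both vectors, truncation drops the same physical side from each, and the induced $(n-1)$-dim expansion is simply $s_k \mapsto s_k'$. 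The subtle case is $s_k$ not originally the largest but $s_k' > b_n^{(i)}$: now truncation removes $b_n^{(i)}$ from the closed vector while removing $s_k'$ from the expanded vector, so physically different sides disappear. Comparing multisets, the induced $(n-1)$-dim change is $s_k \mapsto b_n^{(i)}$, which is still a valid single-side enlargement because $b_n^{(i)} < s_k' \le 2 s_k$ forces the new length to satisfy the factor-two bound.

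Once the expansion structure is verified, nesting comes for free: strict coordinatewise domination $\vec{u} \prec \vec{v}$ of sorted $n$-dim vectors trivially restricts to strict domination $(u_1,\dots,u_{n-1}) \prec (v_1,\dots,v_{n-1})$ of their first $n-1$ entries, so every ``box inside box'' relation in the original system is inherited by its $(n-1)$-dim truncation in both the natural and reverse orders. The only real obstacle is the third subcase above: one must notice that truncating sorted vectors can remove physically different sides from the closed and expanded configurations, and then use the inequality $s_k' \le 2 s_k$ to confirm that the resulting $(n-1)$-dim enlargement still respects the factor-two cap.
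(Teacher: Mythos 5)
Your proposal is correct and follows essentially the same route as the paper: both truncate each box to its smallest $n-1$ sorted coordinates, observe that sorted-vector domination is inherited by the truncations, and handle the one subtle case where the expanding side overtakes the largest closed side by reinterpreting the induced expansion as $s_k \mapsto b_n^{(i)}$, which is valid because $s_k \le b_n^{(i)} \le s_k' \le 2s_k$. The paper's proof performs exactly this case analysis (its case $\alpha_n = a_j'$ with $a_j \le a_n \le a_j'$), so there is nothing substantive to add.
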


\begin{proof}
Suppose we have $k$ boxes in $n$ dimensions. Consider one of the boxes, say $A$, with sides $a_1 \leq a_2 \leq \dots \leq a_n$. We build a new box $\mathcal{A}$ in $n-1$ dimensions with sides $a_1 \leq a_2 \leq \dots \leq a_{n-1}$.

Suppose box $A$ has sides $\alpha_1 \leq \alpha_2 \leq \dots \leq \alpha_n$ when expanded. Here, $\alpha_i$ could equal $a_i$, $a_{i+1}$, or $a_j'$. Suppose $\alpha_n = a_n$ or $\alpha_n = a_n'$. Then, our new box $\mathcal{A}$ has sides $\alpha_1 \leq \alpha_2 \leq \dots \leq \alpha_{n-1}$ when expanded. In other words, we ignore the largest side. Suppose $\alpha_n = a_j'$, then $A$ has side $j$ that expands. To build box $\mathcal{A}$, we let $a_j$ expand to $a_n$ instead of $a_j'$. Note that $a_j \leq a_n \leq a_j'$, implying that such an expansion is possible. If $\alpha_n = a_j'$, then $\alpha_{n-1} = a_n$. So, for box $\mathcal{A}$, we can replace $\alpha_{n-1} = a_n$ with $\alpha_{n-1} = a_j'$. Hence, it is possible to expand $\mathcal{A}$ to $\alpha_1 \leq \alpha_2 \leq \dots \leq \alpha_{n-1}$.

We replaced boxes in $n$ dimensions with new boxes such that all the sides except the largest one have the same set of values as in the original boxes in both the natural and reversed order. This means the resulting boxes satisfy all the necessary inequalities.
\end{proof}

\begin{corollary}
In $n > 3$ dimensions, we cannot have more than 3 boxes.
\end{corollary}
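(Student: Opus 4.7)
The plan is to derive the corollary by descent on dimension, using the two prior results as black boxes. By Theorem~\ref{thm:no5}, no dimension admits 5 or more Gozinta Boxes, so the only question is whether 4 boxes can exist in some dimension $n > 3$. I would suppose for contradiction that such a configuration exists.

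Next, I would invoke Proposition~\ref{prop:removeddimension} repeatedly. A single application converts a set of 4 Gozinta Boxes in dimension $n$ into a set of 4 Gozinta Boxes in dimension $n-1$. Iterating this $n-3$ times yields a set of 4 Gozinta Boxes in dimension 3, which directly contradicts the theorem asserting that 4 Gozinta Boxes in 3D are impossible. Formally one could phrase this as induction on $n \ge 3$: the base case $n=3$ is exactly the 3D theorem, and the inductive step is Proposition~\ref{prop:removeddimension} (in contrapositive form, a failure of 4 boxes in dimension $n-1$ forces a failure in dimension $n$).

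There is essentially no obstacle here; the proof is a two-line deduction once the earlier results are in hand. The only minor subtlety to check is that Proposition~\ref{prop:removeddimension} genuinely applies to the whole collection simultaneously — that is, the construction of the lower-dimensional $\mathcal{A}$ from $A$ is performed uniformly across all $k$ boxes in the set, so the resulting $k$ boxes in $n-1$ dimensions still form a valid Gozinta configuration in both the natural and reverse orders. This is clear from how the proposition is stated and proved, so the corollary follows immediately.
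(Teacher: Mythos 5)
Your proof is correct and matches the paper's intended argument: the corollary follows immediately from iterating Proposition~\ref{prop:removeddimension} down to dimension 3 and invoking the 3D impossibility theorem (the paper leaves this deduction implicit). Your remark about applying the dimension-reduction uniformly to the whole collection is a reasonable sanity check and consistent with how the proposition is proved.
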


In $n>3$ dimensions, we can build $3$ Gozinta Boxes in a similar manner as in Example~\ref{ex:6-8-10} for $3$ dimensions: The outermost and innermost boxes have dimensions $(n, n+1, \dots, 2n-2, 2n-1)$ and expand on their smallest side. The box in between them has dimensions $(n+0.5, n+1.5, \dots, 2n-1.5, 2n-0.5)$ and does not need to expand. In both the natural and reverse order, one box expands to have integer dimensions $n+1$ to $2n$, the middle box has dimensions from $n+0.5$ to $2n-0.5$, and the smallest box has dimensions from $n$ to $2n-1$. Here, the 3 boxes are shown as a diagram, where we added brackets around dimensions for readability: 
\begin{center}
\begin{tabular}{r @{\,$\times$\,} c @{\,$\times$\,} c @{\,$\times$\,} c @{\,$\times$\,} c @{\,$\times$\,} l}
$[n+1]$ & $[n+2]$ & $[n+3]$ & $\cdots$ &  $[2n-1]$ & $[2n(n)]$\\
$[n+0.5]$ & $[n+1.5]$ & $[n+2.5]$ & $\cdots$ & $[2n-1.5]$ & $[2n-0.5]$ \\
$[n]$ & $[n+1]$ & $[n+2]$ & $\cdots$ &  $[2n-2]$ & $[2n-1]$.\\      
\end{tabular}
\end{center}

\section{Permutations}
\label{sec:permutations}

Suppose our initial order is the natural order; that is, the order of the boxes corresponds to the identity permutation. For which permutations $p$ on $n$ elements can we prove that there exist boxes that can fit in the natural order and in order $p$? We call a permutation $p$ for which it is possible an \textit{achievable} permutation.

We define the \textit{coolness} of the trick that transforms the natural order into permutation $p$ as the number of inversions of $p$. Tricks with a higher number of inversions look more impressive.

We also ask a different question: given a set of boxes in the natural order, what other permutations can be achieved with the same set of boxes? We call such sets of permutations \textit{jointly-achievable}.

\subsection{Achievable permutations}

We can construct achievable permutations with a larger number of boxes by using achievable permutations with a smaller number of boxes. Here, we use the standard notation $S_n$ to denote the permutation group of $n$ elements.

\begin{theorem}
\label{thm:boost}
	If permutations $p_1 \in S_n$ and $p_2 \in S_k$ are achievable, then we can build an achievable permutation $p' \in S_{n+k}$ by using one of the following operations.
	\begin{enumerate}[label=Operation \arabic*:, leftmargin=*, labelindent=1em]
	\item Add $n$ to all elements of permutation $p_2$ and append the result at the end of $p_1$.
	\item Replace element $x$ in $p$ with a pair $x(x+1)$ and increase all other elements greater than $x$ by 1.
	\item If permutation $p$ is achievable, then so is its inverse $p^{-1}$.
	\end{enumerate}
\end{theorem}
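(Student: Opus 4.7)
I would handle the three operations independently. In each case the strategy is to modify or combine already achievable configurations, and the common technique is to keep every required strict domination open by either rescaling generously or perturbing by a small amount.

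For Operation 1, the idea is \emph{scaling-then-concatenation}. Take boxes $A_1,\ldots,A_n$ realizing $p_1$ and boxes $B_1,\ldots,B_k$ realizing $p_2$. Multiply every closed and expanded side of each $B_j$ by a large constant $M$, chosen so that the smallest entry of the closed dimensions of any rescaled $B_j$ strictly exceeds the largest entry of the expanded dimensions of any $A_i$. Use the $A$'s as the boxes labeled $1,\ldots,n$ and the rescaled $B$'s as the boxes labeled $n+1,\ldots,n+k$. In the natural order, nest the $A$'s using the $p_1$-natural sub-arrangement and then stack the rescaled $B$'s using the $p_2$-natural sub-arrangement; the size gap makes the $A_n$-to-$B_1$ interface legal. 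For the permuted order, apply $p_1$ to the inner block and shifted $p_2$ to the outer block, again using the size gap at the interface.

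For Operation 2, the idea is to insert a \emph{scaled twin} of box $x$. Let $\vec{x}$ and $\vec{x}'$ be its closed and expanded dimensions, with the expansion occurring on coordinate $j$. Introduce a new box with closed and expanded dimensions $(1+\delta)\vec{x}$ and $(1+\delta)\vec{x}'$ respectively, still expanding on coordinate $j$; this is a legitimate Gozinta box because the expansion bound scales with the box. For $\delta$ small enough, this twin strictly contains $x$ in whichever state $x$ occupies in each of the two orders, and is strictly contained in whichever box sat directly above $x$ in that order. Insert the twin immediately above $x$ in both orders, relabel so that the twin receives label $x+1$, and shift every original label greater than $x$ up by one. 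The resulting arrangement realizes the transformation described in the statement.

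For Operation 3, the idea is pure \emph{relabeling}. Assume a set of boxes is arranged as the identity in one order and as $p$ in the other. Rename the box that currently carries label $p(j)$ to carry the new label $j$, so that old label $i$ becomes new label $p^{-1}(i)$. Under this renaming, the arrangement that was $p$ now reads as the identity, and the arrangement that was the identity now reads as $p^{-1}$. Declaring the former to be the new natural order shows that $p^{-1}$ is achievable.

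The main obstacle is Operation 2, where a single $\delta>0$ must work simultaneously for every strict domination required by the two orders. This is an open-condition argument: each original strict inclusion $\vec{u}\prec\vec{v}$ contributes an upper bound on $\delta$, and only finitely many such constraints appear, so taking $\delta$ below the minimum of these bounds produces a valid twin.
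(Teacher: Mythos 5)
Your proof is correct and follows essentially the same route as the paper: create a size gap between the two sets for Operation 1, insert a slightly inflated twin of box $x$ for Operation 2, and relabel for Operation 3. The only cosmetic difference is that you scale multiplicatively (by $M$ and by $1+\delta$) where the paper shifts additively (by a constant and by $\epsilon$ smaller than the least nonzero gap between sides); both preserve the expansion bound and all strict inclusions.
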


\begin{proof}
Operation 1 is equivalent to the following. Consider two sets of Gozinta Boxes. The first set can be permuted according to $p_1$ and the second according to $p_2$. Then, we can add a constant to each side in the second set so that the smallest side in the second set is larger than the largest closed or expanded side of the first set. We can then arrange the first set inside the second set in any permutation that is independently allowed for each set.

Consider Operation 2. Suppose $d$ is the smallest nonzero difference between any two sides of every pair of boxes. Choose $\epsilon < d$. Consider box number $x$ in natural order with sides $x_i$, where $i$ ranges from 1 to $n$. We can build a new box with sides $x_i+\epsilon$. The new box can contain box number $x$ as each side is slightly longer. It can also fit inside any box that was able to fit box number $x$ inside.

For Operation 3, assume that the permutation $p$ is achievable. Now, instead of assuming that the identity corresponds to the natural order, we can assume that $p$ corresponds to the natural order. We relabel the boxes so that permutation $p$ becomes the identity. We apply the inverse $p^{-1}$ to get back to the original identity permutation. Thus, the same set of boxes that make $p$ achievable, after relabeling, show that $p^{-1}$ is achievable.
\end{proof}

Now, we show which permutations of orders 2, 3, and 4 are achievable.

\begin{proposition}
All permutations of orders 2 and 3 are achievable. For permutations of order 4, only the reverse permutation is not achievable.
\end{proposition}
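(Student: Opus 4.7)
The plan handles each case size by size. The identity is trivially achievable (any nested chain of boxes) and the transposition $(2,1)$ is Example~\ref{ex:3-4-5}, so $S_2$ is done. For $S_3$, the reverse $(3,2,1)$ is Example~\ref{ex:6-8-10}, and the four remaining non-identity permutations are produced by Theorem~\ref{thm:boost} applied to $(2,1)$: Operation~1 yields $(2,1,3)$ and $(1,3,2)$, while Operation~2 at $x=1$ and $x=2$ yields $(3,1,2)$ and $(2,3,1)$.

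The substance of the proposition is the $n=4$ case. The non-achievability of the reverse $(4,3,2,1)$ is immediate from the theorem of Section~\ref{sec:3D}, because it is precisely the natural/reverse configuration on four boxes. For the remaining 23 permutations I would split them into three groups. First, the eleven \emph{splittable} permutations---those $p$ with $p(\{1,\dots,k\}) = \{1,\dots,k\}$ for some $k \in \{1,2,3\}$---follow directly from Operation~1 applied to achievable permutations in $S_1$, $S_2$, and $S_3$. Second, applying Operation~2 to each of the six $S_3$ permutations produces six additional, non-splittable elements of $S_4$, namely $(2,3,4,1)$, $(3,4,1,2)$, $(3,4,2,1)$, $(4,1,2,3)$, $(4,2,3,1)$, and $(4,3,1,2)$.

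The remaining six permutations, $(2,4,3,1)$, $(4,1,3,2)$, $(2,4,1,3)$, $(3,1,4,2)$, $(3,2,4,1)$, and $(4,2,1,3)$, are neither splittable nor reachable by a single application of Operation~2. A direct check of inverses shows they partition into the three pairs $\{(2,4,3,1),(4,1,3,2)\}$, $\{(2,4,1,3),(3,1,4,2)\}$, and $\{(3,2,4,1),(4,2,1,3)\}$, so it suffices to exhibit an explicit set of Gozinta Boxes realizing one representative from each pair and then apply Operation~3 to obtain the other.

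Producing these three explicit examples is the main obstacle. Each requires assigning closed and expanded dimensions to four boxes consistently with both the natural order and the target permutation. Using the observation from Section~\ref{sec:whichboxesareclosed} that every box is closed in exactly one of the two orders, together with Lemma~\ref{lem:closedexpands} to control the expansion of any box that is innermost in one of the two orders, one can tabulate the forced closed/expanded pattern, read off the resulting chain of inequalities among the twelve side lengths, and search for small integer or near-integer solutions analogous to Example~\ref{ex:6-8-10}. I expect crafting these three dimension sets to be the technical heart of the proof; once they are in hand, Theorem~\ref{thm:boost} assembles the rest.
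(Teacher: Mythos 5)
Your reduction is correct and in fact mirrors the paper's proof almost step for step: the same use of Operation~1 for the splittable permutations, the same six non-splittable permutations reached by Operation~2 (your list $2341, 3412, 3421, 4123, 4231, 4312$ is exactly the paper's), and the same pairing by inverses of the residual set $\{2413, 2431, 3142, 3241, 4132, 4213\}$ into three classes with representatives $2413$, $2431$, $3241$. The bookkeeping all checks out, and the appeal to the 3D impossibility theorem for $4321$ is exactly what the paper does.

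The gap is the part you yourself flag: you never exhibit the three sets of boxes realizing $2413$, $2431$, and $3241$, and for these permutations the explicit construction \emph{is} the proof --- there is no a priori reason the required system of inequalities on twelve side lengths is solvable, and indeed for $4321$ the analogous system is not. Describing a search strategy (tabulate the forced closed/expanded pattern via Lemma~\ref{lem:closedexpands}, then look for small solutions) does not establish existence. The paper closes this by writing down concrete dimensions; e.g., for $2413$ it uses boxes $(12,16,20)$, $(13,17,21)$, $(14,18,22)$, $(15,19,23)$, where in the permuted order $B$ and $D$ stay closed while $A$ and $C$ expand their smallest sides to $24$ and $25$, and similar arithmetic families work for $2431$ and $3241$. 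So your approach would succeed, but as written the proof is incomplete precisely at its ``technical heart.''
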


\begin{proof}
We know that permutations 21 and 321 are achievable. They correspond to a pair and a triple of real Gozinta Boxes, respectively. Now we use Theorem~\ref{thm:boost} to find other achievable permutations.

For 3 Gozinta Boxes, we want to check permutations 132, 213, 231, and 312. To do so, we use the fact that permutation 21 is achievable. Then permutation 132 is achievable by using Operation 1 on permutations 1 and 21 and permutation 213 is achievable by using Operation 1 on permutations 21 and 1. Permutations 231 and 312 are achievable by Operation 2. To get permutation 231, we replace 2 in 21 with 32. To get permutation 312, we replace 1 in 21 with 12 and increase 2 at the beginning to 3.

Now we look at four boxes. By Theorem~\ref{thm:boost} and the fact that any permutation of order 3 is achievable, we can achieve permutations that start with 1 or end with 4 by Operation 1. The former is equivalent to $p_1 = 1$, and the latter to $p_2 = 1$. If $p_1 = 21$ and $p_2 = 21$, we can achieve permutation 2143. 
    
By Operation 2, we can achieve permutations that have two consecutive numbers next to each other in order: 2341, 3412, 3421, 4123, 4231, and 4312. We are left with 7 permutations: 2413, 2431, 3142, 3241, 4132, 4213, and 4321. We now check which of them are reverses of each other. We have $2413^{-1} = 3124$, $2431^{-1} = 4132$, $3241^{-1} = 4213$, and $4321^{-1} = 4321$.

We are left with 4 permutations to consider. We now show that three of them are achievable through examples. For permutation 2413, we have the following example:
\begin{center}
\noindent
\makebox[\textwidth][c]{
\begin{minipage}[t]{0.31\textwidth}
\centering
\begin{tabular}{c @{\hspace{0.5em}} r @{\,$\times$\,} c @{\,$\times$\,} l}
$D:$ & 15 & 19 & 23 \\
$C:$ & 14 & 18 & 22 \\
$B:$ & 13 & 17 & 21 \\
$A:$ & 12 & 16 & 20          
\end{tabular}
\end{minipage}
\hspace{0.06\textwidth}  
\begin{minipage}[t]{0.31\textwidth}
\centering
\begin{tabular}{c @{\hspace{0.5em}} r @{\,$\times$\,} c @{\,$\times$\,} l}
$C:$ & 18 & 22 & 25(14) \\
$A:$ & 16 & 20 & 24(12) \\
$D:$ & 15 & 19 & 23 \\
$B:$ & 13 & 17 & 21.        
\end{tabular}
\end{minipage}
}
\end{center}

For permutation 2431, we have the following example:
\begin{center}
\noindent
\makebox[\textwidth][c]{
\begin{minipage}[t]{0.31\textwidth}
\centering
\begin{tabular}{c @{\hspace{0.5em}} r @{\,$\times$\,} c @{\,$\times$\,} l}
$D:$ & 18  & 22 & 24(14) \\
$C:$ & 15    & 19   & 23 \\
$B:$ & 13    & 17   & 21 \\
$A:$ & 12    & 16   & 20          
\end{tabular}
\end{minipage}
\hspace{0.06\textwidth}  
\begin{minipage}[t]{0.31\textwidth}
\centering
\begin{tabular}{c @{\hspace{0.5em}} r @{\,$\times$\,} c @{\,$\times$\,} l}
$A:$ & 16    & 20   & 24(12) \\
$C:$ & 15    & 19   & 23 \\
$D:$ & 14  & 18 & 22 \\
$B:$ & 13    & 17   & 21.         
\end{tabular}
\end{minipage}
}
\end{center}

For permutation 3241, we have the following example. Interestingly, boxes $A$ and $C$ are the same.
\begin{center}
\noindent
\makebox[\textwidth][c]{
\begin{minipage}[t]{0.31\textwidth}
\centering
\begin{tabular}{c @{\hspace{0.5em}} r @{\,$\times$\,} c @{\,$\times$\,} l}
$D:$ & 15    & 18   & 19(12) \\
$C:$ & 13    & 16   & 18(10) \\
$B:$ & 11    & 14   & 17 \\
$A:$ & 10   & 13 & 16          
\end{tabular}
\end{minipage}
\hspace{0.06\textwidth}  
\begin{minipage}[t]{0.31\textwidth}
\centering
\begin{tabular}{c @{\hspace{0.5em}} r @{\,$\times$\,} c @{\,$\times$\,} l}
$A:$ & 13  & 16 & 19(10) \\
$D:$ & 12    & 15   & 18 \\
$B:$ & 11    & 14   & 17 \\
$C:$ & 10    & 13   & 16.          
\end{tabular}
\end{minipage}
}
\end{center}

Finally, we have already proved that 4321 is impossible. Thus, any permutation of $4$ elements except 4321 is achievable.
\end{proof}

Unsurprisingly, the only permutation of 4 boxes that is unachievable is the coolest one.

\subsection{Jointly-achievable permutations}

The physical triple Gozinta Boxes \cite{TCC} can be put in exactly two orders: black inside red inside white, and the reverse ordering. We see that in this example, only two permutations, $ABC$ and $CBA$, are jointly-achievable. Is it possible to achieve more permutations?

\begin{theorem}
\label{thm:notallsix}
	Given three boxes in any dimension, all six permutations are not jointly-achievable.
\end{theorem}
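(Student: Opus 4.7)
The plan is to prove the theorem by contradiction using a short partial-order argument. Suppose, for contradiction, that all six permutations of $\{A, B, C\}$ are jointly achievable. By the observations in Section~\ref{sec:whichboxesareclosed}, in each permutation we may take the innermost box to be closed and the outermost box to be expanded, with the middle box in a state of our choosing.

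The key step is to isolate one box whose innermost role forces a collision. The relation $\prec$ is transitive and irreflexive on $\{\vec{a}, \vec{b}, \vec{c}\}$, so it is a strict partial order on this three-element set, and hence has a maximal element. Call the corresponding box $X$ and the other two boxes $Y$ and $Z$; by maximality, $\vec{x} \not\prec \vec{y}$ and $\vec{x} \not\prec \vec{z}$.

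I would then examine the two permutations in which $X$ is innermost: $XYZ$ and $XZY$. In $XYZ$ the middle box $Y$ must strictly contain closed $X$; were $Y$ also closed, this would mean $\vec{x} \prec \vec{y}$ and contradict the maximality of $X$, so $Y$ must be expanded, and the chain of containments forces the dimensions of expanded $Y$ to be strictly dominated by those of expanded $Z$. Running the identical argument on $XZY$ forces the dimensions of expanded $Z$ to be strictly dominated by those of expanded $Y$, contradicting the previous inequality.

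The main obstacle is simply recognizing that the problem collapses to a maximal-element observation; once the maximal closed box $X$ is identified, the two permutations with $X$ innermost immediately give incompatible containments between the expanded $Y$ and expanded $Z$, and no case analysis on which particular sides of the boxes expand is required.
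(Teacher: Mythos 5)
There is a genuine gap in the final step. Your argument treats ``the dimensions of expanded $Y$'' and ``the dimensions of expanded $Z$'' as fixed vectors, so that ``expanded $Y \prec$ expanded $Z$'' (from $XYZ$) and ``expanded $Z \prec$ expanded $Y$'' (from $XZY$) contradict each other. But the amount by which a box's expanding side is extended is a free parameter that can be chosen separately for each arrangement; the same box can be expanded by a small amount in one permutation and by a large amount in another. (The paper's proof is explicit about this, introducing the notation $a_i'(p)$ for the value side $a_i$ expands to \emph{in permutation $p$}.) Consequently the two domination relations live in different configurations and do not clash. A concrete 2D counterexample to your intermediate claim: take $X=(10,15)$, $Y=(10,14)$, $Z=(9,18)$, each expanding its smallest side. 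Here $X$ is maximal (indeed no box strictly dominates any other), yet both permutations with $X$ innermost are achievable: for $XYZ$ we have $(10,15)\prec(14,16)\prec(15,18)$, expanding $Y$'s side $10\to 16$ and $Z$'s side $9\to 15$; for $XZY$ we have $(10,15)\prec(11,18)\prec(14,19)$, expanding $Z$'s side $9\to 11$ and $Y$'s side $10\to 19$. All expansions respect the bound $s_i'\le 2s_i$. In the first arrangement expanded $Y$ is dominated by expanded $Z$, in the second the reverse holds, and there is no contradiction because the expanded states differ.

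This is why the theorem cannot be obtained from only the two permutations in which a maximal box is innermost, and why your closing remark that ``no case analysis on which particular sides of the boxes expand is required'' underestimates the difficulty. The paper's proof first reduces to 2D (via Proposition~\ref{prop:removeddimension}, so that only two coordinates need tracking), then plays \emph{three} permutations against each other ($CBA$, $BAC$, and $BCA$ in its labeling), with a case split on where the expanded side of the middle box lands in the sorted order; the contradiction ultimately comes from inequalities among the \emph{closed} (hence permutation-independent) side lengths such as $c_2<b_2$ versus $b_2<c_2$, not from comparing expanded states across permutations. Your maximal-element observation is a reasonable way to see that the middle box must be expanded in $XYZ$, but some version of the paper's multi-permutation, coordinate-level analysis is still needed to finish.
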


\begin{proof}
We showed in Proposition~\ref{prop:removeddimension} that if there exists a set of Gozinta Boxes in $n$ dimensions, then it exists in $n-1$ dimensions as well. We proved this by removing the largest side of each box. In the proof of the proposition, we never used the fact that the Gozinta Boxes use the reverse order as the second permutation. The same proof works for any permutation and also for any set of permutations. Thus, it is enough to prove this theorem for 2D boxes.

Suppose we have three Gozinta Rectangles $A$, $B$, and $C$, which can be arranged in any order. Without loss of generality, we assume that $C$ has the largest smallest side: $c_1 \ge \max\{a_1,b_1\}$, and, to distinguish $A$ and $B$, we also assume $a_2 \le b_2$.
 
It follows that sides $a_1$ and $b_1$ have to expand. They might expand to different values in different permutations. Thus, we introduce a new notation: $a_i'(p)$ is the value side $a_i$ expands to for the permutation $p$.

Consider a particular permutation $CBA$ when $A$ is outside $B$. Then we have $a_2 \leq b_2 < a'_1(CBA)$, so $a_2$ is the smallest side of $A$ in this permutation. Given that $a_2 \le b_2$, the side $b_2$ can not be the smallest side in this permutation. Therefore, in this permutation, the dimensions of $B$ are $(b_1'(CBA),b_2)$. As $C$ is the inner box, we can assume that $C$ is closed, and we get that $c_2 < b_2$ as we can see in the diagram below:
\begin{table}[h!]
\centering
\begin{tabular}{c @{\hspace{0.5em}} r @{\,$\times$\,}  l}
$A:$ & $a_2$ & $a_1'(CBA)$ \\
$B:$ & $b_1'(CBA)$ & \boldmath$b_2$  \\
$C:$ & $c_1$ & \boldmath$c_2$.   
\end{tabular}
\end{table}

Now we look at the permutation $BAC$. Still, side $a_2$ has to be smaller than $a'_1(BAC)$. However, the smallest side of the expanded box $C$ cannot be $c_1$ as $c_1 < a_2$, as can be seen in the case when $C$ is the innermost box. It follows that the expanded box $C$ has dimensions either $(c'_1(BAC),c_2)$ or $(c_2,c'_1(BAC))$. Both cases are shown in the diagrams below:
\begin{center}
\noindent
\makebox[\textwidth][c]{
\begin{minipage}[t]{0.31\textwidth}
\centering
\begin{tabular}{c @{\hspace{0.5em}} r @{\,$\times$\,}  l}
$C:$ & $c'_1(BAC)$ & \boldmath$c_2$ \\
$A:$ & $a_2$ & $a_1'(BAC)$  \\
$B:$ & $b_1$ & \boldmath$b_2$
\end{tabular}
\end{minipage}
\hspace{0.06\textwidth}  
\begin{minipage}[t]{0.31\textwidth}
\centering
\begin{tabular}{c @{\hspace{0.5em}} r @{\,$\times$\,}  l}
$C:$ & \boldmath$c_2$ & $c_1'(BAC)$ \\
$A:$ & \boldmath$a_2$ & $a_1'(BAC)$  \\
$B:$ & $b_1$ & $b_2$.
\end{tabular}
\end{minipage}
}
\end{center}

The left case implies that $b_2 < c_2$, which contradicts the diagram above for permutation $CBA$.

Consider the right case, which implies that $c_2>a_2$. Now, we look at the permutation $BCA$. Since $c_1$ is the expanding side of box $C$ and $b_2>c_2$, we get that $C$ has to be expanded to $(c_2,c_1'(BCA))$: 
\begin{center}
\begin{tabular}{c @{\hspace{0.5em}} r @{\,$\times$\,}  l}
$C:$ & \boldmath$c_2$ & $c_1'(BCA)$  \\
$B:$ & $b_1$ & $b_2$. 
\end{tabular}
\end{center}

From the $BAC$ diagram, we have that $a_1 \leq a_2 < c_2$, implying that box $A$ cannot be outside of the configuration above.
\end{proof}

We proved that all 6 permutations are not jointly-achievable, but we have an example where 5 permutations are jointly-achievable. The example below allows one set of boxes to be arranged in orders corresponding to all permutations except for $BAC$.

\begin{example}
\label{ex:butBAC}
Suppose boxes $A$, $B$, and $C$ have dimensions $(10, 13, 16)$, $(11, 14, 17)$, and $(9, 12, 15)$, respectively. We can achieve all permutations except $BAC$, as shown in the diagrams below:
\begin{center}
\noindent
\makebox[\textwidth][c]{
\begin{minipage}[t]{0.31\textwidth}
\centering
\begin{tabular}{c @{\hspace{0.5em}} r @{\,$\times$\,} c @{\,$\times$\,} l}
$C:$ & 12 & 15 & 18(9) \\
$B:$ & 11 & 14 & 17  \\
$A:$ & 10 & 13 & 16 \\        
\end{tabular}
\end{minipage}
\hspace{0.06\textwidth}  
\begin{minipage}[t]{0.31\textwidth}
\centering
\begin{tabular}{c @{\hspace{0.5em}} r @{\,$\times$\,} c @{\,$\times$\,} l}
$B:$ & 14 & 17 & 18(11) \\
$C:$ & 12 & 15 & 17(9) \\
$A:$ & 10 & 13 & 16 \\       
\end{tabular}
\end{minipage}
\hspace{0.06\textwidth}  
\begin{minipage}[t]{0.31\textwidth}
\centering
\begin{tabular}{c @{\hspace{0.5em}} r @{\,$\times$\,} c @{\,$\times$\,} l}
$A:$ & 13 & 16 & 19(10) \\
$C:$ & 12 & 15 & 18(9) \\
$B:$ & 11 & 14 & 17 \\     
\end{tabular}
\end{minipage}
}
\end{center}

\begin{center}
\noindent
\makebox[\textwidth][c]{
\begin{minipage}[t]{0.31\textwidth}
\centering
\begin{tabular}{c @{\hspace{0.5em}} r @{\,$\times$\,} c @{\,$\times$\,} l}
$B:$ & 11 & 14 & 17 \\
$A:$ & 10 & 13 & 16  \\
$C:$ & 9  & 12 & 15 \\        
\end{tabular}
\end{minipage}
\hspace{0.06\textwidth}  
\begin{minipage}[t]{0.31\textwidth}
\centering
\begin{tabular}{c @{\hspace{0.5em}} r @{\,$\times$\,} c @{\,$\times$\,} l}
$A:$ & 13 & 16 & 18(10) \\
$B:$ & 11 & 14 & 17 \\
$C:$ & 9  & 12 & 15.
\end{tabular}
\end{minipage}
\hspace{0.06\textwidth}  
}
\end{center}
\vspace{-2.05em}\qedhere
\end{example}

Here, we illustrate the removal of the largest side when reducing the number of dimensions by 1. We use Example~\ref{ex:butBAC} for boxes in 3D, achieving 5 permutations, and show how we can generate from them the set of 2D boxes achieving the same permutations.

\begin{example}
Suppose boxes $A$, $B$, and $C$ have dimensions $(10, 13)$, $(11, 14)$, and $(9, 12)$, respectively. We can achieve all permutations except $BAC$, as shown in the diagrams below:
\begin{center}
\noindent
\makebox[\textwidth][c]{
\begin{minipage}[t]{0.31\textwidth}
\centering
\begin{tabular}{c @{\hspace{0.5em}} r @{\,$\times$\,} l}
$C:$ & 12 & 15(9) \\
$B:$ & 11 & 14  \\
$A:$ & 10 & 13 \\        
\end{tabular}
\end{minipage}
\hspace{0.06\textwidth}  
\begin{minipage}[t]{0.31\textwidth}
\centering
\begin{tabular}{c @{\hspace{0.5em}} r @{\,$\times$\,} l}
$B:$ & 14 & 17(11) \\
$C:$ & 12 & 15(9) \\
$A:$ & 10 & 13 \\       
\end{tabular}
\end{minipage}
\hspace{0.06\textwidth}  
\begin{minipage}[t]{0.31\textwidth}
\centering
\begin{tabular}{c @{\hspace{0.5em}} r @{\,$\times$\,} l}
$A:$ & 13 & 16(10) \\
$C:$ & 12 & 15(9) \\
$B:$ & 11 & 14 \\     
\end{tabular}
\end{minipage}
}
\end{center}

\begin{center}
\noindent
\makebox[\textwidth][c]{
\begin{minipage}[t]{0.31\textwidth}
\centering
\begin{tabular}{c @{\hspace{0.5em}} r @{\,$\times$\,} l}
$B:$ & 11 & 14 \\
$A:$ & 10 & 13  \\
$C:$ & 9  & 12 \\        
\end{tabular}
\end{minipage}
\hspace{0.06\textwidth}  
\begin{minipage}[t]{0.31\textwidth}
\centering
\begin{tabular}{c @{\hspace{0.5em}} r @{\,$\times$\,} l}
$A:$ & 13 & 16(10) \\
$B:$ & 11 & 14 \\
$C:$ & 9  & 12.
\end{tabular}
\end{minipage}
\hspace{0.06\textwidth}  
}
\end{center}
\vspace{-2.05em}\qedhere
\end{example}

Surprisingly, the coolest permutation, $CBA$, is included in our examples, while a less cool permutation is not. However, the following proposition shows that each permutation can be the one that is excluded.

\begin{proposition}
 Any set of five permutations of three boxes can be jointly-achievable.
\end{proposition}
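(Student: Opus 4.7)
The plan is to leverage Example~\ref{ex:butBAC}, which already exhibits three boxes achieving the five permutations $S_3\setminus\{BAC\}$, and then use a ``change of natural order'' argument---essentially the jointly-achievable analogue of Operation~3 in Theorem~\ref{thm:boost}---to handle the four remaining cases, namely those in which the omitted permutation is $ACB$, $BCA$, $CAB$, or $CBA$.

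The key observation is that the set of physically realizable arrangements of a fixed collection of boxes is intrinsic to the boxes themselves; what depends on the choice of natural order is only the labeling of those arrangements as elements of $S_3$. Concretely, suppose a set of boxes realizes exactly the arrangements in $S\subseteq S_3$ when one fixed arrangement is designated as natural (and hence labeled by the identity). If we instead relabel the three boxes so that the arrangement originally labeled $\tau\in S$ becomes the new natural order, then each arrangement previously labeled $\sigma$ is now labeled $\tau^{-1}\sigma$, so the new jointly-achievable set is $\tau^{-1}S$.

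Applied to Example~\ref{ex:butBAC}, where $S=S_3\setminus\{BAC\}$, this produces new jointly-achievable sets $\tau^{-1}S=S_3\setminus\{\tau^{-1}\cdot BAC\}$ for each $\tau\in S$. I would then compute the five products $\tau^{-1}\cdot BAC$ directly as $\tau$ ranges over $\{ABC,ACB,BCA,CAB,CBA\}$, and verify that the resulting missing permutations are precisely the five non-identity elements of $S_3$ (for instance, $\tau=ACB$ yields missing $CAB$, $\tau=BCA$ yields missing $ACB$, $\tau=CAB$ yields missing $CBA$, and $\tau=CBA$ yields missing $BCA$). Since every jointly-achievable set must contain the identity (the natural order), this exhausts all five possible five-element jointly-achievable subsets of~$S_3$.

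The only step requiring real care is the relabeling observation itself: one must carefully verify that renaming the physical boxes translates arrangement labels by left-multiplication by~$\tau^{-1}$, and in particular that the arrangement previously labeled $\tau$ becomes the new identity. Once this bookkeeping is set up correctly, the rest of the proof reduces to a finite check of five products in $S_3$, so there is no substantive obstacle beyond keeping the conventions straight.
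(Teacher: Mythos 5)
Your proposal is correct and follows essentially the same route as the paper: take the boxes of Example~\ref{ex:butBAC}, which realize all permutations except $BAC$, and relabel them so that the excluded permutation becomes any desired one. The paper states this relabeling step in one line, whereas you spell out the left-multiplication bookkeeping and the finite check; the underlying argument is the same.
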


\begin{proof}
In Example~\ref{ex:butBAC}, we achieve 5 permutations with the same set of boxes. The excluded permutation is $BAC$. If we relabel our boxes, we can make $BAC$ into any other permutation. 
\end{proof}

We are interested in the special case when two boxes are the same. Then, these two boxes can always be swapped, implying that the number of possible permutations is even. It follows from Theorem~\ref{thm:notallsix} that the number of joint permutations in this case is 2 or 4. The example below shows a set of boxes that allows 4 permutations.

\begin{example}
Consider our main example of triple Gozinta Boxes from Example~\ref{ex:6-8-10}, where two boxes $A$ and $C$ have dimensions $(6, 8, 10)$, and box $B$ has dimensions $(7, 9, 11)$. The diagrams below show permutations $ABC$ and $ACB$:
\begin{center}
\noindent
\makebox[\textwidth][c]{
\begin{minipage}[t]{0.31\textwidth}
\centering
\begin{tabular}{c @{\hspace{0.5em}} r @{\,$\times$\,} c @{\,$\times$\,} l}
$C:$ & 8 & 10 & 12(6) \\
$B:$ & 7 & 9 & 11 \\
$A:$ & 6  & 8 & 10
\end{tabular}
\end{minipage}
\hspace{0.06\textwidth}  
\begin{minipage}[t]{0.31\textwidth}
\centering
\begin{tabular}{c @{\hspace{0.5em}} r @{\,$\times$\,} c @{\,$\times$\,} l}
$B:$ & 9 & 11 & 14(7) \\
$C:$ & 8 & 10 & 12(6) \\
$A:$ & 6  & 8 & 10.
\end{tabular}
\end{minipage}
\hspace{0.06\textwidth}  
}
\end{center}
By symmetry, permutations $CBA$ and $CAB$ are also achievable, making 4 jointly-achievable permutations.
\end{example}

\section{From one set of boxes to another set of boxes}
\label{sec:FromTo}

Suppose we have a set of Gozinta Boxes in any dimension that can be put into two or more different orders. We are interested in finding operations on the side lengths of the boxes in the set so that we can build a new set of boxes that can be used to perform the same trick.

\begin{proposition}
\label{prop:FromTo}
Consider a set of Gozinta Boxes that can be put into several orders. If we perform one of the following operations on the sides of the boxes, we will get a set of Gozinta Boxes that can be put in the same set of orders as the original set of boxes.
\begin{enumerate}[label=Operation \arabic*:, leftmargin=*, labelindent=1em]
        \item Multiply all sides of all boxes by a constant.
        \item Add a constant positive number to all sides of all boxes.
    \end{enumerate}
\end{proposition}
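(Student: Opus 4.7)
The plan is to verify that both operations preserve the two structural requirements that make a configuration of boxes valid: the \emph{strict} component-wise dominance $\vec{u} \prec \vec{v}$ between the sorted dimensions of each inner box (possibly expanded) and the outer box (possibly expanded) in every permitted order, and the \emph{expansion bound} $a'_i \leq 2a_i$ on each side that actually expands in some order. Along the way I also want to check that the non-decreasing ordering $a_1 \le a_2 \le \dots \le a_n$ of sides within each box is preserved, so that the notion of ``smallest side'', ``largest side'', etc., and the sorted comparisons, still make sense after the transformation.

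For Operation 1, the verification is essentially homogeneous. I would fix $k > 0$, define the new boxes to have closed side $ka_i$ and (for each order in which side $i$ of box $A$ expands) new expanded length $ka'_i$. Each strict inequality $x < y$ among sides scales to $kx < ky$; each sorted chain $a_1 \le \dots \le a_n$ scales to $ka_1 \le \dots \le ka_n$; and the bound $a'_i \le 2a_i$ becomes $ka'_i \le 2(ka_i)$. So every dominance inequality required for the original orders holds verbatim for the new boxes, and the same sides remain legal to expand.

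For Operation 2, I would fix a constant $c > 0$ and define the new boxes to have closed side $a_i + c$ and new expanded length $a'_i + c$. Strict dominance inequalities are translation-invariant, since $x < y$ is equivalent to $x + c < y + c$, and the sorted orderings within each box are likewise preserved. The only step that is not purely formal is the expansion bound: from $a'_i \le 2a_i$ we want $a'_i + c \le 2(a_i + c)$, i.e.\ $a'_i \le 2a_i + c$, which holds because $a'_i \le 2a_i \le 2a_i + c$ using $c \ge 0$. Thus the same assignment of ``which side of which box expands in which order'' remains admissible, and all the inequalities that encoded the original set of orders continue to hold.

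The main obstacle, and really the only subtle point, is this last expansion-bound check for Operation 2: unlike the dominance inequalities it is not literally translation-invariant, since the slack $2a_i - a'_i$ actually \emph{grows} by $c$ rather than staying constant. Positivity of $c$ is what saves us (and is why the proposition explicitly restricts to positive constants); had $c$ been negative we could easily violate $a'_i + c \le 2(a_i + c)$. Everything else is routine bookkeeping on inequalities.
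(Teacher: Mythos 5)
Your proof is correct and follows essentially the same route as the paper: reduce validity of a configuration to a system of strict inequalities among (closed and expanded) side lengths plus the expansion bound, observe that both are preserved under scaling, and handle the one non-trivial point for translation via $a'_i + c \leq 2a_i + c \leq 2(a_i + c)$, exactly as the paper does. No gaps.
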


\begin{proof}
A set of Gozinta Boxes can be put into a set of particular orders if and only if a specific set of inequalities is satisfied. These inequalities are of the form $s_1 < s_2$, $s'_1 < s_2$, $s_1 < s'_2$, and $s'_1 < s'_2$. The dimensions also need to satisfy inequalities of the form $s'_i < 2s_i$.

When we multiply each closed and expanded side by the same constant, all inequalities will still hold. Thus, the new boxes can be put in the same orders as the original boxes.

We can also add a constant $c > 0$ to each closed and expanded side. Moreover, if a side $a_i$ can expand to $a'_i$, then the side $a_i +c$ can expand to $a'_i + c$. Indeed, if $a'_i < 2a_i$, then $a'_i + c < 2a_i + c < 2(a_i+c)$. This means that adding a constant does not affect the expansion bound. It also does not affect any of the other inequalities. Therefore, we can put the boxes in the same orders as the original boxes.
\end{proof}

Suppose we have a set of Gozinta Boxes that could have been put in a particular order if the expansion bound was not in place. Namely, the Gozinta Boxes could have been put in some order, if we allow the expansion $a'_i$ such that $a'_i \geq 2a_i$. Then, we can find a set that satisfies the bound constraint and can be put in the given order.

\begin{proposition}
\label{prop:ignoreexpansionconstraint}
While looking for Gozinta Boxes that can be put in particular orders, we can ignore the expansion bound $s_i' \leq 2s_i$.
\end{proposition}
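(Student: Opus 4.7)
The plan is to reduce to Operation 2 of Proposition~\ref{prop:FromTo} and exploit the fact that adding a large positive constant to every side shrinks the ratio $s'_i/s_i$ toward $1$, so any violation of the expansion bound can be absorbed.

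Suppose we are given a hypothetical set of boxes with closed side lengths $s_i$ and expanded side lengths $s'_i$ that satisfies every inequality needed to realize the desired orders of containment (inequalities of the form $s_1 < s_2$, $s'_1 < s_2$, $s_1 < s'_2$, $s'_1 < s'_2$ between sides of different boxes, together with $s_i \le s'_i$ for each expanding side), but possibly fails the bounds $s'_i \le 2s_i$. There are only finitely many expanding sides; define
\[
  c \;>\; \max_i \bigl(s'_i - 2s_i\bigr),
\]
with the maximum ranging over all expanding sides of all boxes, and pick any such $c > 0$. Now replace every closed side $s_i$ by $s_i + c$ and every expanded side $s'_i$ by $s'_i + c$.

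I would then verify two things. First, every containment inequality is preserved: each original inequality has the form $u < v$ with $u,v \in \{s_j, s'_j\}$ for various indices, and adding the same $c$ to both sides preserves strict inequality, so all ordering relations required by the chosen permutations still hold after translation. Second, the expansion bound now holds for every expanding side, since
\[
  s'_i + c \;\le\; 2s_i + c + c \;=\; 2(s_i + c),
\]
by the choice of $c$. Thus the translated configuration is an honest set of Gozinta Boxes realizing the same orders.

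No step here is a serious obstacle; the only thing to be careful about is the \emph{finiteness} of the set of expansion inequalities that need to be satisfied, which is what guarantees the maximum defining $c$ exists. Once that is observed, the proof is just Operation~2 applied with a sufficiently large translation constant, so the proposition reduces to a one-line choice of $c$ followed by the observation that Operation~2 preserves all ordering relations.
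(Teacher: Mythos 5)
Your proof is correct and follows essentially the same route as the paper: both add a sufficiently large positive constant to all sides (Operation 2 of Proposition~\ref{prop:FromTo}) and then check that the translated sides satisfy $s'_i + c < 2(s_i + c)$. The only difference is cosmetic — you choose $c > \max_i(s'_i - 2s_i)$, while the paper takes the constant larger than every side length, which makes the final verification $s'_i + m < 2m < 2(s_i+m)$ slightly more immediate; either choice works.
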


\begin{proof}
Suppose there exists a set of Gozinta Boxes that can be put in particular orders, except the expansion bound $s'_i < 2s_i$ might not be satisfied for some sides of some boxes. Suppose $m$ is greater than the maximum of all such $s_i$ and $s'_i$. Then, by Proposition~\ref{prop:FromTo}, we can add $m$ to all the sides of all the boxes, creating a set of Gozinta Boxes that can be put in the same orders with the expansion bound satisfied. Indeed, for each side we get $s'_i + m < 2m < 2(s_i+m)$.
\end{proof}

We now add a third operation that we can use. We use Proposition~\ref{prop:ignoreexpansionconstraint} to ignore the expansion bound.

\begin{proposition}
Consider a set of Gozinta Boxes that can be put into several particular orders. In addition, we assume that any dimension can be extended to the largest possible dimension needed. If we perform the following operation on the sides of the boxes, we will get a set of Gozinta Boxes that can be put in the same set of orders as the original set of boxes.
\begin{enumerate}[label=Operation \arabic*:, leftmargin=*, labelindent=1em]
  \setcounter{enumi}{2} 
        \item If there exists a side with length $s$ and two numbers $x$ and $y$ such that $x \leq s \leq y$ and any other side closed or expanded is either less than $x$ or more than $y$, then we can replace $s$ with any number between $x$ and $y$, inclusive.
    \end{enumerate}
\end{proposition}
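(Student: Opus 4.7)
The plan is to regard each realizable ordering of the boxes as being determined by a finite collection of strict inequalities of the form $u < v$ between sides (closed or expanded), together with the expansion bounds $s_i' \le 2 s_i$. By Proposition~\ref{prop:ignoreexpansionconstraint} we may disregard the expansion bounds while searching, so the task reduces to showing that the replacement $s \mapsto s^*$, with $s^* \in [x,y]$, leaves every ordering inequality intact.

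First I would dispose of the inequalities not involving $s$: these use none of the changed data, so they are automatically preserved. For each remaining inequality, only $s$ is modified, and it is compared with some other side $t$. By hypothesis $t < x$ or $t > y$. In the first case, $t < x \le s$ and $t < x \le s^*$, so the truth values of $t < s$ and $t < s^*$ agree, and similarly $s < t$ and $s^* < t$ are both false. In the second case, symmetrically, $t > y \ge s$ and $t > y \ge s^*$. Hence every comparison involving $s$ has the same truth value after the replacement, and the full system of ordering inequalities characterizing our collection of permutations continues to hold.

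It remains to note that the replacement can violate an expansion bound (shrinking a closed side $s_i$ could force $s_i' > 2 s^*$), but this is not an obstruction: after the replacement the new sides satisfy every ordering relation, and adding a sufficiently large positive constant to every side of every box (Proposition~\ref{prop:FromTo}, Operation~2) restores each expansion bound without disturbing any strict ordering. The resulting family is a genuine set of Gozinta Boxes achieving precisely the same collection of orderings as the original.

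I do not expect a major obstacle here; the only mild point to verify is that when $s$ is a closed side, its expanded counterpart $s'$ is itself one of the ``other'' sides, so the inequality $s < s'$ must remain valid after the replacement. The case analysis handles this: since $s \le y$ and $s < s'$, the value $s'$ cannot satisfy $s' < x$, so $s' > y$, and therefore $s^* \le y < s'$ as required. The analogous remark covers the case where $s$ is an expanded side, so $s_i$ lies below $x$ and continues to satisfy $s_i < s^*$.
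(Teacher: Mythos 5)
Your argument is correct and follows essentially the same route as the paper: every comparison involving the replaced side $s$ is against a quantity lying outside $[x,y]$, so its truth value is unchanged when $s$ is moved to any $s^*\in[x,y]$, and the remaining inequalities do not involve $s$ at all. Your extra care about the expansion bound (restoring it by adding a large constant) is a reasonable addition, though the paper sidesteps this by invoking Proposition~\ref{prop:ignoreexpansionconstraint} up front.
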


\begin{proof}
Consider box $A$ with side $s$. Suppose we replace $s$ with a number $r$ between $x$ and $y$, making a new box $B$. Then, if some box fits inside of $A$, it also fits inside $B$ because all the sides that need to be less than $s$ are less than $r$. Similarly, if box $A$ fits inside another box, box $B$ also fits inside because all the sides that need to be greater than $s$ are greater than $r$ too.
\end{proof}

The analysis in this section implies that for large enough boxes, we only care about the relative lengths of their sides. In the next section, we classify two Gozinta Boxes up to the relative ordering of their sides.

\section{Two boxes}
\label{sec:twoboxes}

We consider two Gozinta boxes, $A$ and $B$, that can be put in natural and reverse orders. Without loss of generality, we assume that $a_1 \leq b_1$. 

In Section~\ref{sec:FromTo}, we discussed how we could modify a set of Gozinta Boxes to make a different set of Gozinta Boxes, as well as how to get rid of the expansion bound.

\begin{example}
In Example~\ref{ex:largestsideexpands}, we had two boxes with dimensions $(5, 7, 999)$ and $(6, 8, 500)$, where the second box has to expand on the largest side. We can build a new set of boxes by adding 1000 to each side. For the new pair of boxes $(1005, 1007, 1999)$ and $(1006, 1008, 1500)$ any side in the second box can expand.
\end{example}

Now, we discuss all the possibilities for the relative orders of the sides of boxes $A$ and $B$. First, we simplify our potential cases.

\begin{lemma}
\label{lem:2boxes}
Looking for possible relative orders of sides, we can assume that the smallest side expands to the largest side for both boxes.
\end{lemma}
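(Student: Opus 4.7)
Proof proposal. The plan is to start from an arbitrary valid two-box configuration with closed dimensions $a_1 \leq \cdots \leq a_n$ and $b_1 \leq \cdots \leq b_n$ (with the WLOG assumption $a_1 \leq b_1$) and to modify only the choice of expanded side and the magnitude of its expansion, leaving the closed dimensions fixed. Since Section~\ref{sec:twoboxes} classifies configurations up to the relative order of the closed sides, such modifications do not change the case being classified. I will invoke Proposition~\ref{prop:ignoreexpansionconstraint} to drop the expansion bound $s_i' \leq 2s_i$, so that any expanded side may be taken arbitrarily large. I will also perturb so that all $2n$ closed sides are distinct; in particular, $a_1 < b_1$.

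I handle box $A$ first. In reverse order, sorted expanded $A$ strictly dominates $\vec{b}$, so its smallest coordinate exceeds $b_1$. If $A$ expanded some $a_j$ with $j > 1$, this smallest coordinate would be $a_1 < b_1$, a contradiction. Hence $A$ already expands $a_1$. I then reset the value $a_1'$ to any number exceeding $a_n$, so that sorted expanded $A$ becomes $(a_2, a_3, \ldots, a_n, a_1')$, which realizes the smallest-to-largest pattern. The main technical step, which I expect to be the key obstacle, is verifying that this reset configuration still dominates $\vec{b}$. For this I prove the following elementary claim: if $(\alpha_1, \ldots, \alpha_n)$ is the sorted multiset obtained from $(a_1, \ldots, a_n)$ by replacing a single entry $a_j$ with any $a_j' \geq a_j$, then $\alpha_i \leq a_{i+1}$ for $1 \leq i \leq n-1$. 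This follows from a short case split on whether $j \leq i+1$ or $j > i+1$; in either case the new multiset contains at least $i$ elements bounded above by $a_{i+1}$. Combining this claim with the original dominance $\alpha_i > b_i$ yields $a_{i+1} > b_i$ for $i = 1, \ldots, n-1$, which, together with $a_1' > b_n$ (free for $a_1'$ large by Proposition~\ref{prop:ignoreexpansionconstraint}), are precisely the inequalities required for $(a_2, \ldots, a_n, a_1') \succ \vec{b}$.

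Box $B$ is handled symmetrically in natural order, with one wrinkle: unlike $A$, the assumption $a_1 < b_1$ does not force $B$ to expand $b_1$, so $B$'s original expansion could be on any $b_k$. The uniform applicability of the claim compensates for this. Applying the claim to $\vec{b}$ and the original sorted expansion $\beta$ of $B$ gives $b_{i+1} \geq \beta_i > a_i$ for $i = 1, \ldots, n-1$, regardless of which $b_k$ was originally expanded. Resetting $B$ to expand $b_1$ past $b_n$ then produces sorted expanded $B = (b_2, \ldots, b_n, b_1')$ strictly dominating $\vec{a}$, which combined with the reset of $A$ yields the desired smallest-to-largest pattern for both boxes.
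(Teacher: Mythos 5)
Your proof is correct, and its logical skeleton matches the paper's: after invoking Proposition~\ref{prop:ignoreexpansionconstraint} to drop the expansion bound, show that the ``smallest side expanded to something huge'' configuration dominates whatever expansion was originally used. The execution differs in a worthwhile way. The paper works only in 3D and simply enumerates the six possible sorted expanded vectors of a box, checking domination relations by hand to reduce to $(b_2,b_3,b_1')$. You instead prove a general-$n$ multiset claim --- increasing one entry of $(a_1,\dots,a_n)$ and re-sorting yields $\alpha_i \le a_{i+1}$ --- which converts the original domination $\alpha_i > b_i$ directly into the inequalities $a_{i+1} > b_i$ needed for the reset configuration $(a_2,\dots,a_n,a_1')$. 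This buys you a dimension-free statement and avoids case enumeration; the paper's version buys brevity for the 3D case it actually needs. One small criticism: your perturbation making all $2n$ closed sides distinct is both unnecessary (every step of your argument, including the deduction that $A$ must expand $a_1$, goes through with $a_1 \le b_1$ and strict domination) and slightly at odds with the lemma's purpose, since the subsequent classification explicitly retains configurations with equalities among closed sides; you should drop it rather than argue that the perturbed configuration represents the original.
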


\begin{proof}
When box $B$ is on the outside, the dimensions of the expanded $B$ in non-decreasing order could be one of $(b'_1,b_2,b_3)$, $(b_2,b'_1,b_3)$, $(b_2,b_3,b'_1)$, $(b_1,b'_2,b_3)$, $(b_1,b_3,b'_2)$, and $(b_1,b_2,b'_3)$. We notice that $(b_2,b_3,b'_1) \succ (b_2,b'_1,b_3) \succ (b'_1,b_2,b_3)$ and $(b_1,b_3,b'_2) \succ (b_1,b'_2,b_3)$. Thus, we can assume that the expanded side becomes the largest, and the dimensions of the expanded $B$ are one of $(b_2,b_3,b'_1)$, $(b_1,b_3,b'_2)$, and $(b_1,b_2,b'_3)$. By Proposition~\ref{prop:ignoreexpansionconstraint}, we can ignore the expansion bounds. This means we can assume that the smallest side can expand to anything we need. In particular, it can expand to the maximum of $b'_1$, $b'_2$, and $b'_3$ above. In this case, we have $(b_2,b_3,b'_1) \succeq (b_1,b_3,b'_2) \succeq (b_1,b_2,b'_3)$. This means if box $A$ fits inside box $B$ with the largest side of box $B$ expanded, it also fits inside box $B$ with the smallest side expanded.
\end{proof}

Now we are ready to classify two 3D Gozinta Boxes based on the relative ordering of their sides.

\begin{theorem}
	If two 3D Gozinta Boxes $A$ and $B$ can fit inside each other, and $a_1 \leq b_1$, then the dimensions can be divided into the following four types, where in each inequality chain, no two adjacent inequalities can both be equalities:
	\begin{enumerate}[label=Type \arabic*:, leftmargin=*, labelindent=1em]
	\item $a_1 \le b_1 < a_2  \leq b_2 < a_3 \le b_3$,
	\item $a_1 \le b_1 < a_2 \le b_2 \le b_3<a_3$,
	\item $a_1 \le b_1 \leq b_2 < a_2 \leq a_3 \le b_3$,
	\item $a_1 \leq b_1 \leq b_2 < a_2 < b_3 < a_3$.
	\end{enumerate}
\end{theorem}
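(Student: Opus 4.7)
The plan is to apply Lemma~\ref{lem:2boxes} to reduce to the canonical case where each box's smallest side is the one that expands and becomes the new largest side. Under this reduction, the expanded dimensions (in sorted order) of $B$ are $(b_2, b_3, b_1')$ and of $A$ are $(a_2, a_3, a_1')$. Writing down the strict dominance conditions for $A$ to fit inside expanded $B$ (natural order) and $B$ to fit inside expanded $A$ (reverse order) gives the six strict inequalities
\[a_1 < b_2,\quad a_2 < b_3,\quad a_3 < b_1',\qquad b_1 < a_2,\quad b_2 < a_3,\quad b_3 < a_1'.\]
By Proposition~\ref{prop:ignoreexpansionconstraint}, the expansion-bound constraints $a_1' \le 2a_1$ and $b_1' \le 2b_1$ may be ignored, so only the four inequalities not involving primed variables ($a_1 < b_2$, $a_2 < b_3$, $b_1 < a_2$, $b_2 < a_3$) will drive the classification.

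Next I would combine these with the sortedness $a_1 \le a_2 \le a_3$ and $b_1 \le b_2 \le b_3$ together with the standing assumption $a_1 \le b_1$. The chain $a_1 \le b_1 < a_2$ and the inequalities $b_2 < a_3$ and $a_2 < b_3$ then hold unconditionally. What remains undetermined is the position of $b_2$ relative to $a_2$ and of $b_3$ relative to $a_3$, so I would split into four subcases according to these two binary choices. Case $(a_2 \le b_2,\ a_3 \le b_3)$ yields Type 1; case $(a_2 \le b_2,\ a_3 > b_3)$ forces $b_2 \le b_3 < a_3$ (using $b_2 \le b_3$), yielding Type 2; case $(a_2 > b_2,\ a_3 \le b_3)$ gives $b_1 \le b_2 < a_2 \le a_3 \le b_3$, Type 3; and case $(a_2 > b_2,\ a_3 > b_3)$ combined with $a_2 < b_3 < a_3$ gives Type 4. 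This exhausts the possibilities.

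The last thing to verify is the condition that no two adjacent inequalities in a chain can both be equalities. This falls out from the strict inequalities in Step 1. For Type 1, every adjacent $\le$-pair is separated by a $<$, so the condition is vacuous. For Type 2, the only adjacent $\le$-pair is $a_2 \le b_2 \le b_3$; simultaneous equality would force $a_2 = b_3$, contradicting $a_2 < b_3$. For Type 3, the pair $a_1 \le b_1 \le b_2$ with simultaneous equality would force $a_1 = b_2$, contradicting $a_1 < b_2$, and the pair $a_2 \le a_3 \le b_3$ with simultaneous equality would give $a_2 = b_3$, again a contradiction. For Type 4, only $a_1 \le b_1 \le b_2$ is adjacent, and simultaneous equality again violates $a_1 < b_2$.

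There is no real obstacle here; the argument is essentially bookkeeping once Lemma~\ref{lem:2boxes} and Proposition~\ref{prop:ignoreexpansionconstraint} put us in the clean setting. The one place requiring care is keeping track of which inequalities in the final chains are forced to be strict (from the fitting conditions) versus merely $\le$ (from sortedness or the WLOG assumption), since the adjacent-equality condition is exactly the statement that the $<$'s derived in Step 1 cannot be degenerated by simultaneously collapsing two $\le$'s on either side of them.
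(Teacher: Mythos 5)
Your proposal is correct and follows essentially the same route as the paper: reduce via Lemma~\ref{lem:2boxes} to the case where each box expands its smallest side to become the largest, extract the four unprimed strict inequalities $b_1<a_2<b_3$ and $a_1<b_2<a_3$, and split into four cases on the relative order of $a_2,b_2$ and of $a_3,b_3$. Your verification of the no-two-adjacent-equalities condition is, if anything, spelled out more carefully than in the paper.
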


\begin{proof}
By Lemma~\ref{lem:2boxes}, we can assume that the following conditions are true:
\[(a_2,a_3,a'_1) \succ (b_1,b_2,b_3) \quad \textrm{ and } \quad (b_2,b_3,b_1') \succ (a_1,a_2,a_3).\]
Combining these inequalities, we get $b_1 < a_2 < b_3$ and $a_1 < b_2 < a_3$.

Now, we consider cases depending on whether $a_2 \leq b_2$ and $a_3 \leq b_3$.

Suppose $a_2 \leq b_2$ and $a_3 \leq b_3$. Thus, we get $a_1 \le b_1 < a_2  \leq b_2 < a_3 \le b_3$, which is Type 1.

Suppose $a_2 \leq b_2$ and $a_3 < b_3$. Thus, we get $a_1 \le b_1 < a_2 \le b_2 \le b_3 < a_3$. In addition, $a_2 < b_3$, so we cannot have two equalities in a row.

Suppose $a_2 < b_2$ and $a_3 \geq b_3$. Thus, we get $a_1 \le b_1 \leq b_2 < a_2 \leq a_3 \le b_3$. In addition, $a_1 < b_2$ and $a_2 < b_3$, so we cannot have two equalities in a row.

Suppose $a_2 < b_2$ and $a_3 < b_3$. Thus, we get $a_1 \leq b_1 \leq b_2 < a_2 < b_3 < a_3$. In addition, $a_1 < b_2$, so we cannot have two equalities in a row.
\end{proof}

We also have examples for each type.

\begin{example}
In Example~\ref{ex:6-8-10}, box $A$ has dimensions $(6,8,10)$, and box $B$ has dimensions $(7,9,11)$. These boxes satisfy $a_1<b_1<a_2<b_2<a_3<b_3$ and are of Type 1.

In Example~\ref{ex:6-9-10}, box $A$ has dimensions $(6, 9, 10)$, and box $B$ has dimensions $(7, 8, 11)$. These boxes satisfy $a_1 <  b_1 < b_2 < a_2 < a_3 < b_3$ and are of Type 3.

In Example~\ref{ex:5-11-13versus7-10-12}, box $A$ has dimensions $(5, 11, 13)$, and box $B$ has dimensions $(7, 10, 12)$. These boxes satisfy $a_1 <  b_1 < b_2 < a_2 < b_3 < a_3$ and are of Type 4.
\end{example}

We also show examples where some sides are equal.

\begin{example}
	 In Example~\ref{ex:3-4-5}, both boxes have dimensions $(3,4,5)$ and they satisfy $a_1 = b_1 < a_2 = b_2 < a_3 = b_3$, which is Type 1 in our classification. It is borderline to any other type.
\end{example}

Here is an example for $b_1=b_2$ and $a_2=a_3$ combined in the same set of boxes. 

\begin{example}
Box $A$ has dimensions $(4, 6, 6)$, and box $B$ has dimensions $(5, 5, 7)$. These boxes satisfy $a_1 < b_1 = b_2 < a_2 = a_3 < b_3$ and are of Type 3. The diagram below shows the natural order on the left and the reverse order on the right:
\begin{center}
\noindent
\makebox[\textwidth][c]{
\begin{minipage}[t]{0.31\textwidth}
\centering
\begin{tabular}{c @{\hspace{0.5em}} r @{\,$\times$\,} c @{\,$\times$\,} l}
$B:$ & 5 & 7 & 7(5) \\
$A:$ & 4 & 6 & 6     
\end{tabular}
\end{minipage}
\hspace{0.06\textwidth}  
\begin{minipage}[t]{0.31\textwidth}
\centering
\begin{tabular}{c @{\hspace{0.5em}} r @{\,$\times$\,} c @{\,$\times$\,} l}
$A:$ & 6 & 6 & 8(4) \\
$B:$ & 5 & 5 & 7.
\end{tabular}
\end{minipage}
}
\end{center}
\vspace{-2.05em}\qedhere
\end{example}

\section{Acknowledgments}

We are grateful to the PRIMES STEP program for giving us the opportunity to conduct this research.


\begin{thebibliography}{9}

\bibitem{FM} Funtime Magic, Adair’s Gozinta Envelopes, available at \url{https://funtimemagic.com/adairs-gozinta-envelopes/}, accessed in 2025.

\bibitem{PM} Penguin Magic, In and Out Box, available at \url{https://www.penguinmagic.com/p/14942?srsltid=AfmBOorr5YISe9AMBeUXyhovRVAGUB1DyevFROTanEJIiatVV3QMXrVb}, accessed in 2025.

\bibitem{PP} Peter Prevos, The Box Goes-In-Da Box, A Treatise on the Gozinta Boxes. Third Hemisphere Publishing, 2016, available for download at \url{https://www.lybrary.com/}. 

\bibitem{SME} Stevens Magic Emporium, Driebeck Die (Ken Brooke), available at \url{https://stevensmagic.com/products/driebeck-die-ken-brooke-large}, accessed in 2025.

\bibitem{TCC} TCC Magic, Triple Gozinta Boxes by Ivo David Oliveira \& TCC Magic, \url{https://tccmagic.com/products/triple-gozinta-boxes}, accessed in 2025.


\end{thebibliography}
\end{document}